\newcommand{\RR}{{\mathbb R}}
\newcommand{\ZZ}{{\mathbb Z}}
\newcommand{\bc}{\begin{center}}
\newcommand{\ec}{\end{center}}
\newcommand{\be}{\begin{enumerate}}
\newcommand{\ee}{\end{enumerate}}
\newcommand{\bi}{\begin{itemize}}
\newcommand{\ei}{\end{itemize}}
\newcommand{\beq}{\begin{equation}}
\newcommand{\eeq}{\end{equation}}
\newtheorem{exmp}{Example}
\newcommand{\bex}{\begin{exmp}}
	\newcommand{\eex}{\end{exmp}}
\newcommand{\pplus}{{p_{+}}}
\newcommand{\pminus}{{p_{-}}}
\newcommand{\qplus}{{q_{+}}}
\newcommand{\qminus}{{q_{-}}}
\newcommand{\pdot}{{p(\cdot)}}
\newcommand{\qdot}{{q(\cdot)}}
\newcommand{\lpdot}{\ell^{\pdot}}
\newcommand{\lqdot}{\ell^{\qdot}}
\newcommand{\Sol}[1] {\textbf{Solution:}}
\newcommand {\abs}[1] {\lvert#1\rvert}
\newcommand{\normb}[1]{\ensuremath{\left\lVert#1\right\rVert }}
\newcommand\frightarrow{\scalebox{1}[.3]{$\rule[.45ex]{2ex}{1.5pt}%
		\kern-.2ex{\blacktriangleright}$}}
\newcommand{\vertiii}[1]{{\left\vert\kern-0.25ex\left\vert\kern-0.25ex\left\vert #1 
		\right\vert\kern-0.25ex\right\vert\kern-0.25ex\right\vert}}
\let\RR\Reals
\let\ZZ\Ints
\newtheorem{theorem}{Theorem}[section]
\newtheorem{lemma}[theorem]{Lemma}
\theoremstyle{definition}
\newtheorem{definition}[theorem]{Definition}
\newtheorem{remark}[theorem]{Remark}
\numberwithin{equation}{section}
\begin{document}


\baselineskip=17pt


\title{The boundedness of Fractional Hardy-Littlewood maximal operator on variable $\lpdot(\ZZ)$ spaces using Calderon-Zygmund decomposition}

\author{A.Sri Sakti Swarup\\
Department of Mathematics\\ 
 Birla Institute of Technology and Science, Jawahar Nagar\\
Hyderabad-500 078, Telangana, India\\
E-mail: p20180442@hyderabad.bits-pilani.ac.in
\and 
A. Michael Alphonse\\
Department of Mathematics\\ 
 Birla Institute of Technology and Science, Jawahar Nagar\\
Hyderabad-500 078, Telangana, India \\
E-mail: alphonse@hyderabad.bits-pilani.ac.in}

\date{May 19, 2022}

\maketitle



\let\thefootnote\relax\footnotetext{2020 \emph{Mathematics Subject Classification}: Primary 42B35; Secondary 42B20.}

\let\thefootnote\relax\footnotetext{Calderon-Zygmund decomposition, Log Holder continuity, fractional Hardy-Littlewood maximal operator, variable sequence spaces. }





\begin{abstract}
In this paper, we prove strong type, weak type inequalities of Hardy-Littlewood maximal operator and fractional Hardy-Littlewood maximal operator on variable sequence spaces $\lpdot (\ZZ)$. This is achieved using Calderon-Zygmund decomposition for sequences, properties of modular functional and Log Holder continuity.
\end{abstract}

\section{Introduction}\label{sec1}
In this paper, we prove strong type, weak type inequalities of Hardy-Littlewood maximal operator and fractional Hardy Littlewood maximal operator on variable $\lpdot (\ZZ)$ sequence spaces. Inequalities for Hardy Littlewood maximal operator for variable sequence spaces $\lpdot (\ZZ)$, follows from corresponding inequalities for $\ell^p(\ZZ)$ spaces and Log Holder continuity. To prove, the inequalities for fractional Hardy Littlewood maximal operator, we use Calderon Zygmund decomposition for sequences and Log Holder continuity. \\
Several authors have proved strong type and weak type inequalities for Hardy-Littlewood maximal operator and fractional Hardy-Littlewood maximal operator using various methods. In this paper, we provide a method that uses Calderon-Zygmund decomposition for sequences
 to prove strong and weak type inequalities for these maximal operators. Similar proof for the real line can be found in $\text{\cite{fior_new_proof_maximal}}$ where boundedness of Hardy-Littlewood maximal operator is proved using Calderon-Zygmund decomposition. 
Even though, we follow same method of proof given in $\text{\cite{fior_new_proof_maximal}}$, there are some remarkable differences 
between the proofs on integers and proofs on real line. \\
For detailed history of variable Lebesgue spaces, please refer $\text{\cite{fior_var_book}},  \text{\cite{lars1}}$. Similar conclusions in broader scope utilizing framework of homogeneous spaces may be found in $\text{\cite{shukla}}$. For detailed references on work in spaces of homogeneous type under variable exponent setting, please refer  $\text{\cite{alme1}}, \text{\cite{mizuta1}}, \text{\cite{goro1}}, \text{\cite{hast2}},\text{\cite{kok1}}, \text{\cite{kok2}}$.

In this paper, we use the method of Calderon Zygmund decomposition for sequences and Log Holder continuity. We assume only Log Holder continuity at infinity. The local Log Holder continuity follows automatically in case of integers.

\section{Definitions and Notation}

In this paper, the following notation are used. Given a bounded sequence $ \left \{ p(n): n \in \ZZ \right \}$ which takes values in $[1, \infty ) $, define 
$\lpdot (\ZZ)$ to be set of all sequences $\left \{ a(k) \right \}$ on $\ZZ$ such that for some $\lambda>0$, $ \sum_{ k \in \ZZ} ( \frac{\abs{a(k)}}{\lambda}   )^{p(k)}< \infty$. \\
Throughout this paper, $ \left \{ p(n): n \in \ZZ \right \}$ denotes a bounded sequence which takes values in $[1,\infty)$.  Define $ \pminus = \inf \left \{ p(n): n \in \ZZ \right \}, \pplus = \sup \left \{ p(n): n \in \ZZ \right \} $. \\
Let $\mathcal{S}$ denote the set of all bounded sequences which take values in $[1, \infty)$ such that $\pplus < \infty $. \\
For any set $ A \subset \ZZ$ , $\abs{A}$ denotes cardinality of A. 
\\
Let 
\[ \Omega = \left \{ n \in \ZZ: 1 \leq p(n) < \infty \right \} \] 
\[ \Omega_\infty = \left \{ n \in \ZZ: p(n) = \infty \right \} \] Then define modular functional associated with $\pdot$ as
\[ \rho_{\pdot}(a) = \sum_{k \in \ZZ \setminus \Omega_\infty} \abs{ a(k)}^{p(k)} + \normb{a}_{\ell^\infty(\Omega_\infty)} \]

Note, when $\pplus < \infty$, modular functional becomes
\[ \rho_{\pdot}(a) = \sum_{k \in \ZZ } \abs{ a(k)}^{p(k)}  \]
Throughout this paper, we assume $\pplus < \infty$. Further for a given sequence $\left \{ a(k): k \in \ZZ \right \}$, define norm in $\ell^{\pdot}(\ZZ)$ as
 \[ \normb{a}_{\ell^{\pdot}(\ZZ)}  = \inf \left \{ \lambda >0: \rho_{\pdot}(\frac{a}{\lambda}) \leq 1 \right \} \]
$\normb{}_{\lpdot(\ZZ)}$ is a norm $\text{\cite{han}}$. A similar norm on $\ell^{\pdot}(\RR^n)$ is defined in $\text{\cite{fior_var_book}}$ and there it is proved it is a norm. 
 
\begin{definition}
Given a non-negative sequence of real numbers, $ \left \{ a(n): n \in \ZZ  \right \}, 0 \leq \alpha<1$, define fractional Hardy-Littlewood Maximal operator as follows:
\[ M_{\alpha} a(n) = \sup_{n \in I} \abs{I}^{\alpha -1} \sum_{k \in I} \abs{a(k)} \] where supremum is taken over all intervals of integers which contain $n$. When $\alpha=0$, fractional Hardy-Littlewood maximal operator becomes Hardy-Littlewood maximal operator.
\end{definition}
 
\begin{definition}
Given a  sequence $\pdot \in \mathcal{S}$,  we say that $\pdot$ is log-Holder continuous at infinity, if there exists positive real constants $C_\infty, p_\infty$ such that for all $n \in \ZZ$, 
\[ \abs{ p(n) - p_\infty } \leq \frac{ C_\infty} { \log(e + \abs{n})} , n \in \ZZ \] and $e$ is exponential number. In this case, we write $\pdot \in LH_\infty(\ZZ)$
\end{definition}

We will also require Young's inequality in this paper. For any real $ a, b \in \RR$, and any two real numbers $p, q$ such that $\frac{1}{p} + \frac{1}{q}=  1$, Young's inequality $\text{\cite{Krey1}}$ is as follows: 
\[ ab \leq  \frac{a^{p}}{p} + \frac{b^{q}}{q } \]

\begin{lemma}{\label{sak_lem_LH_infty}}
Let $ \left \{ q(n) \right \}$ be the sequence which satisfies $\frac{1}{p(n)}  + \frac{1}{q(n)} =1$  $\forall n \in \ZZ$.  Let $1 \leq \pminus \leq p(n) \leq \pplus < \infty$,  $\forall n \in \ZZ$. The following are equivalent:
\begin{enumerate}
\item $\pdot \in LH_\infty(\ZZ)$
\item $\frac{1}{\pdot} \in LH_\infty(\ZZ)$ 
\item $\frac{1}{\qdot} \in LH_\infty(\ZZ)$ 
\item $\qdot \in LH_\infty(\ZZ)$ 
\end{enumerate}
\end{lemma}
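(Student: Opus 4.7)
My plan is to establish the four equivalences by chaining together three pairwise implications: (1)$\Leftrightarrow$(2), (2)$\Leftrightarrow$(3), and (3)$\Leftrightarrow$(4). The key observations are the algebraic identity $\frac{1}{q(n)} = 1 - \frac{1}{p(n)}$ together with the a priori bounds $1 \leq p_{-} \leq p(n) \leq p_{+} < \infty$, which force $q(n)$ to lie in the analogous band $\frac{p_{+}}{p_{+}-1} \leq q(n) \leq \frac{p_{-}}{p_{-}-1}$ (with the usual convention when $p_{-}=1$).

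For (1)$\Leftrightarrow$(2), I would start by noting that if $p(\cdot)\in LH_\infty(\ZZ)$ with limit $p_\infty$, then since $p(n)\to p_\infty$ as $|n|\to\infty$ and $p_-\leq p(n)\leq p_+$, we automatically get $p_-\leq p_\infty\leq p_+$. Then I would use the identity
\[
\left|\frac{1}{p(n)} - \frac{1}{p_\infty}\right| \;=\; \frac{|p(n)-p_\infty|}{p(n)\,p_\infty},
\]
together with the two-sided bound $p_{-}^{2} \leq p(n)\,p_\infty \leq p_{+}^{2}$, to conclude that $|p(n)-p_\infty|$ and $\bigl|\tfrac{1}{p(n)}-\tfrac{1}{p_\infty}\bigr|$ are comparable pointwise up to the constants $p_\pm^2$. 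Consequently, one log-Hölder decay estimate yields the other, with the constant $C_\infty$ merely rescaled by $p_{-}^{2}$ or $p_{+}^{2}$ depending on direction.

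For (2)$\Leftrightarrow$(3), the crucial observation is that if we \emph{define} $q_\infty$ by $\frac{1}{q_\infty} = 1 - \frac{1}{p_\infty}$ (this is well-defined because $p_\infty \geq p_- \geq 1$, and when $p_-=1$ one handles the case $p_\infty=1$ separately by noting that $p(n)\equiv 1$ in that extreme scenario), then subtracting $1$ from both the identity for $p$ and for $p_\infty$ gives
\[
\frac{1}{q(n)} - \frac{1}{q_\infty} \;=\; \Bigl(1-\tfrac{1}{p(n)}\Bigr) - \Bigl(1-\tfrac{1}{p_\infty}\Bigr) \;=\; \frac{1}{p_\infty} - \frac{1}{p(n)}.
\]
Taking absolute values, the two log-Hölder estimates are \emph{literally the same inequality with the same constant $C_\infty$}, so (2)$\Leftrightarrow$(3) is essentially free. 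Finally, (3)$\Leftrightarrow$(4) is proved by repeating the argument of the first paragraph verbatim, with $q$ in place of $p$ and using the corresponding two-sided bound on $q(n)\,q_\infty$.

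I do not expect a serious obstacle here; the proof is bookkeeping with the algebraic identity and the bounds $p_- \leq p(n) \leq p_+$. The only mildly delicate point is making sure $p_\infty$ (and hence $q_\infty$) inherits the correct bounds from $p(\cdot)$, and handling the degenerate endpoint $p_-=1$ cleanly in step (2)$\Leftrightarrow$(3); both are resolved by taking the pointwise limit along $|n|\to\infty$ in the bound $p_-\leq p(n)\leq p_+$.
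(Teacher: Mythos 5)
Your argument is essentially the paper's: (1)$\Leftrightarrow$(2) via the identity $\bigl|\tfrac{1}{p(n)}-\tfrac{1}{p_\infty}\bigr|=\tfrac{\abs{p(n)-p_\infty}}{p(n)p_\infty}$ together with two-sided bounds on $p(n)p_\infty$, (2)$\Leftrightarrow$(3) for free from $\tfrac{1}{q(n)}-\tfrac{1}{q_\infty}=\tfrac{1}{p_\infty}-\tfrac{1}{p(n)}$, and (3)$\Leftrightarrow$(4) by rerunning the first step with $q$ in place of $p$; your constants differ from the paper's only in that you use $\pminus^2,\pplus^2$ where the paper uses $\pminus p_\infty,\pplus p_\infty$. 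The one incorrect point is your parenthetical treatment of the endpoint: $p_\infty=1$ does \emph{not} force $p(n)\equiv 1$ --- take $p(n)=1+1/\log(e+\abs{n})$, which is in $LH_\infty(\ZZ)$ with $p_\infty=1$ yet $p(n)>1$ everywhere; there $q(n)=\log(e+\abs{n})+1$ is unbounded, no finite $q_\infty$ exists, and the two-sided bound on $q(n)q_\infty$ that your (3)$\Rightarrow$(4) step needs is unavailable. So the degenerate case is not ``resolved by taking the pointwise limit'' as you claim. This is not a defect relative to the paper, however: the paper's own step (d) applies the (2)$\Rightarrow$(1) argument to $q(\cdot)$ and thus silently relies on $\qplus<\infty$ in exactly the same way. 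Both proofs are complete under the implicit assumption $\pminus>1$ (equivalently $\qplus<\infty$).
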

\begin{proof}
(a) We shall prove (1) $\implies (2).\\
 $Let $\pdot \in LH_\infty(\ZZ)$. Note, that when $\pplus < \infty$,  $\forall n \in \ZZ$,
\begin{align*}
\abs{ \frac{1}{p(n)} - \frac{1}{p_\infty}} = \abs{ \frac{p(n) - p_\infty}{p(n)p_\infty}} \leq  \frac{\abs{p(n) - p_\infty}}{\pminus p_\infty} \leq \frac{\frac{C_\infty}{\pminus p_\infty}}{\log(e+\abs{n})}  
\end{align*} 
for some  $LH_\infty$ constant, which is $k_\infty = \frac{C_\infty}{\pminus p_\infty}$. \\
(b) We shall prove (2) $\implies$ (1).\\
Let $\frac{1}{\pdot} \in LH_\infty(\ZZ)$. Then,  $\forall n \in \ZZ$
\begin{align*}
\abs{p(n)-p_\infty} &=\abs{p(n)p_\infty \biggl( \frac{1}{p(n)} - \frac{1}{p_\infty} \biggr) } \leq \abs{p(n)p_\infty}  \frac{C_\infty}{\log(e+\abs{n})}  \\
& \leq \pplus p_\infty   \frac{C_\infty  }{\log(e+\abs{n})} \\
& \leq  \frac{k_\infty}{\log(e+\abs{n})}
\end{align*} for some $LH_\infty$ constant which is  $k_\infty = \pplus p_\infty C_\infty $ .\\
(c) We shall prove that (1) $\implies$ (3). \\
Let $\pdot \in LH_\infty(\ZZ)$. Note, that when $\pplus < \infty$, for $n \in \ZZ$,
\begin{align*}
\abs{ \frac{1}{q(n)} - \frac{1}{q_\infty}} &= \abs{ \frac{1}{p(n)} - \frac{1}{p_\infty}} = \abs{ \frac{p(n) - p_\infty}{p(n)p_\infty}} \leq  \frac{\abs{p(n) - p(\infty)}}{\pminus p_\infty} \\
& \leq \frac{\frac{C_\infty}{\pminus p_\infty}}{\log(e+\abs{n})}  
\end{align*} 
for some  $LH_\infty$ constant, which is $k_\infty = \frac{C_\infty}{\pminus p_\infty}$. \\
(d) We shall prove (3) $\implies$ (4)  and (4) $\implies$ (1)\\
 Let $\frac{1}{\qdot} \in LH_\infty(\ZZ)$. Then $\frac{1}{\frac{1}{\qdot}} \in LH_\infty(\ZZ)$, which implies that $\qdot \in LH_\infty(\ZZ)$. This shows (3) $\implies$ (4). \\
(4) $\implies$ (1) follows same argument as (c) . \\
%
\end{proof}

\begin{lemma}
Let $ \left \{ q(n) \right \}$ be the sequence which satisfies $\frac{1}{p(n)}  + \frac{1}{q(n)} =1$  $\forall n \in \ZZ$.  Let $1 \leq \pminus \leq p(n) \leq \pplus < \infty$,  $\forall n \in \ZZ$. Then, $\pdot \in LH_\infty(\ZZ)$ implies $p_\infty \geq \pminus$.
\end{lemma}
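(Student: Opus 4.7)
The plan is to use the defining inequality of $LH_\infty(\ZZ)$ to show that $p(n) \to p_\infty$ as $|n| \to \infty$, and then exploit the pointwise lower bound $p(n) \geq \pminus$ to conclude $p_\infty \geq \pminus$ by a limit argument.

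First I would unpack the hypothesis: since $\pdot \in LH_\infty(\ZZ)$, there exist constants $C_\infty > 0$ and $p_\infty$ such that
\[
|p(n) - p_\infty| \leq \frac{C_\infty}{\log(e + |n|)} \quad \text{for all } n \in \ZZ.
\]
Because $\log(e + |n|) \to \infty$ as $|n| \to \infty$, the right-hand side tends to $0$, so the sequence $\{p(n)\}$ converges to $p_\infty$ along any subsequence $n_k$ with $|n_k| \to \infty$.

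Next, I would invoke the assumption $p(n) \geq \pminus$ for every $n \in \ZZ$. Choosing any sequence $n_k \in \ZZ$ with $|n_k| \to \infty$, the inequality $p(n_k) \geq \pminus$ is preserved under the limit, yielding
\[
p_\infty = \lim_{k \to \infty} p(n_k) \geq \pminus.
\]
This is the desired conclusion. There is no substantive obstacle here: the argument is a one-line limit passage, and the only point worth being careful about is that $\ZZ$ is unbounded in both directions so such a sequence $n_k$ genuinely exists, which is immediate.
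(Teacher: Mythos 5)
Your argument is correct and is essentially the paper's own: both let $|n| \to \infty$ in the $LH_\infty$ inequality so that the error term $C_\infty/\log(e+|n|)$ vanishes, and then combine this with the pointwise bound $p(n) \geq \pminus$ to conclude. The only cosmetic difference is that the paper first passes to $\frac{1}{\pdot} \in LH_\infty(\ZZ)$ via the preceding equivalence lemma and runs the same limit on the reciprocals $\frac{1}{p(n)} \leq \frac{1}{\pminus}$, whereas you work directly with $p(n)$, which is marginally more direct.
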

\begin{proof}
 Given that $\pdot \in LH_\infty(\ZZ)$ implies $\frac{1}{\pdot} \in LH_\infty(\ZZ)$ using Lemma[$\ref{sak_lem_LH_infty}$], it follows that
\begin{align*}
\abs{ \frac{1}{p_\infty}} & = \abs{ \frac{1}{p(n)} + \frac{1}{p_\infty} - \frac{1}{p(n)} } \\
& \leq \abs{ \frac{1}{p(n)}} + \abs{ \frac{1}{p_\infty} - \frac{1}{p(n) } }  \\
& \leq \abs{\frac{1}{\pminus}} + \frac{C_\infty}{\log(e+\abs{n}) } 
\end{align*}
Since this is true for every $n$, $p_\infty \geq \pminus$.
\end{proof}

The following lemma will be used as a variation of Holder's inequality. Proof of this lemma in continuous version can be found in $\text{\cite{fior_var_book}}$. Same line of proof works here.
\begin{lemma}{\label{sak_holder_lem1}}
For $0 \leq \alpha < 1$, and $ p,q $ such that $ 1< p< \frac{1}{\alpha}, \frac{1}{p} - \frac{1}{q} = {\alpha}$.
For every interval $I$ in $\ZZ$ and non-negative sequence $\left \{ a(k) \right \}$ \\
\[ \abs{I}^{{\alpha} -1 } \sum_{k \in I} a(k) \leq  \biggl( \sum_{k \in I} a(k)^p  \biggr)^{\frac{1}{p} - \frac{1}{q}}  \biggl( \frac{1}{\abs{I}} \sum_{k \in I} a(k)  \biggr)^{\frac{p}{q}} \]
\end{lemma}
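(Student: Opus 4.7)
The plan is to reduce the claim to H\"older's inequality and then track exponents using the defining identity $\alpha + \frac{1}{q} = \frac{1}{p}$. The mathematical content is light; the work is almost entirely bookkeeping of powers.

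First, I would apply H\"older's inequality to the product $a(k)\cdot 1$ over the interval $I$ with conjugate exponents $p$ and $p' = p/(p-1)$:
\[
\sum_{k \in I} a(k) \;\leq\; \Bigl(\sum_{k \in I} a(k)^p\Bigr)^{1/p} |I|^{1 - 1/p}.
\]
Raising both sides to the $p$-th power, this is equivalent to $\bigl(\sum_{k \in I} a(k)\bigr)^p \leq |I|^{p-1}\sum_{k \in I} a(k)^p$, which one may equivalently view as Jensen's inequality for $t \mapsto t^p$ applied to the uniform average over $I$.

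Next, I would raise the displayed inequality to the $\alpha$-th power (both sides are non-negative) and multiply both sides by the positive quantity $\bigl(\sum_{k \in I} a(k)\bigr)^{p/q}\,|I|^{-p/q}$. On the left, the exponents collect to
\[
\Bigl(\sum a(k)\Bigr)^{p\alpha + p/q} \, |I|^{\alpha(1-p) - p/q},
\]
and the two identities
\[
p\alpha + \tfrac{p}{q} \;=\; p\Bigl(\alpha + \tfrac{1}{q}\Bigr) \;=\; 1, \qquad \alpha(1-p) - \tfrac{p}{q} \;=\; \alpha - \Bigl(p\alpha + \tfrac{p}{q}\Bigr) \;=\; \alpha - 1,
\]
force the left-hand side to collapse exactly to $|I|^{\alpha-1}\sum_{k\in I} a(k)$. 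On the right, the factors rearrange to give precisely $\bigl(\sum a(k)^p\bigr)^{\alpha}\bigl(\frac{1}{|I|}\sum a(k)\bigr)^{p/q}$, which is the claimed bound; since $\frac{1}{p}-\frac{1}{q}=\alpha$, the exponent $\alpha$ on the first factor agrees with $\frac{1}{p}-\frac{1}{q}$ as stated.

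The only delicate step is verifying the two exponent identities; I would check each directly from $\alpha = \frac{1}{p}-\frac{1}{q}$ before assembling the final display, since a sign error in either would silently corrupt the whole proof. The case $\alpha = 0$ is automatically covered: it forces $p = q$ and $p/q = 1$, in which case every rearrangement above becomes an equality and no separate argument is needed.
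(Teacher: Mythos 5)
Your argument is correct: the exponent identities $p\alpha + p/q = 1$ and $\alpha(1-p) - p/q = \alpha - 1$ both follow from $\frac1p - \frac1q = \alpha$, and combining them with the H\"older/Jensen bound $\bigl(\sum_{k\in I} a(k)\bigr)^p \le |I|^{p-1}\sum_{k\in I} a(k)^p$ (raised to the power $\alpha \ge 0$) yields exactly the stated inequality, with the degenerate cases $\alpha = 0$ and $a \equiv 0$ on $I$ handled as you note. The paper itself omits the proof and defers to the continuous version in Cruz--Uribe and Fiorenza, which proceeds by the same H\"older-plus-exponent-bookkeeping route, so your proposal is essentially the intended argument.
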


\section{Some Results on Variable Lebesgue spaces}
In this section, we prove certain results that are used in proving boundedness of maximal operators.The proofs of these results in discrete case are same as corresponding results on real line $\text{\cite{fior_var_book}}$. For completeness sake, some of the proofs are included. \\

The modular functional defined in Section 2 has the following properties defined below $\text{\cite{fior_var_book}}$.
\begin{lemma}{\label{sak_modular}}
Let $ \left \{ u(n) \right \} $ be a non-negative sequence of real numbers. Let $\pdot \in \mathcal{S}$, then: 
\begin{enumerate}
\item For all u, $\rho_{\pdot}(u) \geq 0$ and $\rho_{\pdot}(\abs{u})=\rho_{\pdot}(u) $
\item $\rho_{\pdot}(u) =0$ if and only if $u(k)=0$ for all k $\in \ZZ$ 
\item If $\rho_{\pdot}(u) < \infty$, then $u(k) < \infty $ for all k $\in \ZZ$ 
\item $\rho_{\pdot}$ is convex: Given $\alpha, \beta \geq 0, \alpha + \beta =1,  \rho_{\pdot}(\alpha u + \beta v) \leq \alpha \rho_{\pdot}(u) + \beta \rho_{\pdot}(v) $
\item If $\abs{u(k)} \geq \abs{v(k)}$ , then $\rho_{\pdot}(u) \geq \rho_{\pdot}(v) $. 
\item If for some $\delta > 0 , \rho_{\pdot}(\frac{u}{\delta})< \infty $, then the function $\lambda \to \rho_{\pdot}(\frac{u}{\lambda})$ is continuous 
and decreasing on $[\delta, \infty)$. Further $\rho_{\pdot}(\frac{u}{\lambda}) \to 0$ as $\lambda \to \infty $.
\end{enumerate}
\end{lemma}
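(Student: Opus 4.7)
The plan is to verify the six listed properties by working directly from the explicit formula $\rho_{\pdot}(u) = \sum_{k \in \ZZ} \abs{u(k)}^{p(k)}$ (valid here because the standing assumption $\pplus < \infty$ forces $\Omega_\infty = \emptyset$), using only termwise inequalities and the pointwise convexity of $t \mapsto t^{p(k)}$ on $[0, \infty)$, which holds since each $p(k) \geq 1$. Items (1), (2), (3), (5) are essentially bookkeeping from the definition together with the standard facts that a sum of non-negative reals is non-negative, vanishes iff every summand vanishes, has each summand finite whenever the sum is finite, and is monotone in each summand. In each case, the key observation is that because summands are non-negative, the countable sum is an unconditional, well-defined element of $[0,\infty]$ on which these properties transfer directly.

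For (4), I would apply the elementary inequality $(\alpha s + \beta t)^{p(k)} \leq \alpha s^{p(k)} + \beta t^{p(k)}$, valid for $s, t \geq 0$, $\alpha + \beta = 1$, $p(k) \geq 1$, termwise with $s = \abs{u(k)}$ and $t = \abs{v(k)}$, and then sum over $k \in \ZZ$. This, combined with the triangle inequality $\abs{\alpha u(k) + \beta v(k)} \leq \alpha \abs{u(k)} + \beta \abs{v(k)}$ and monotonicity of $r \mapsto r^{p(k)}$ on $[0,\infty)$, immediately yields $\rho_{\pdot}(\alpha u + \beta v) \leq \alpha \rho_{\pdot}(u) + \beta \rho_{\pdot}(v)$.

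The substantive item is (6), which I would handle by a discrete dominated convergence argument. For any $\lambda \geq \delta$, the termwise bound $\abs{u(k)/\lambda}^{p(k)} \leq \abs{u(k)/\delta}^{p(k)}$ holds because $p(k) \geq 1$ and $\lambda \geq \delta$, and the hypothesis $\rho_{\pdot}(u/\delta) < \infty$ supplies a summable majorant on $\ZZ$. For any $\lambda_0 \in [\delta, \infty)$ and $\lambda \to \lambda_0$, each term $\abs{u(k)/\lambda}^{p(k)}$ converges pointwise to $\abs{u(k)/\lambda_0}^{p(k)}$, so dominated convergence yields continuity of $\lambda \mapsto \rho_{\pdot}(u/\lambda)$ on $[\delta, \infty)$. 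Monotonicity is termwise and immediate since each map $\lambda \mapsto \abs{u(k)/\lambda}^{p(k)}$ is non-increasing on $(0,\infty)$. For the limit $\lambda \to \infty$, each term tends to $0$, and dominated convergence gives $\rho_{\pdot}(u/\lambda) \to 0$.

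The only mild obstacle is the invocation of dominated convergence in (6) to swap limits with the infinite sum over $\ZZ$; the hypothesis of (6) is precisely designed to supply the needed $\ell^{1}(\ZZ)$ majorant, so no delicate estimation is required. All remaining steps reduce to elementary algebraic manipulations applied termwise and to summation of non-negative series.
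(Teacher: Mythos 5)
Your proposal is correct. The paper gives no proof of this lemma at all, deferring to \cite{fior_var_book}; your argument (termwise bookkeeping for (1)--(3) and (5), pointwise convexity of $t\mapsto t^{p(k)}$ for (4), and a dominated convergence argument with majorant $\abs{u(k)/\delta}^{p(k)}$ for (6)) is exactly the standard proof used there, transferred to sums over $\ZZ$, so it fills the gap faithfully.
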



The following lemma gives the connection between modular functional and $\lpdot(\ZZ)$ norm.
\begin{lemma} 
Let $ \left \{ u(n) , n \in \ZZ \right \} $ be a non-negative sequence of real numbers. Let $\pdot \in \mathcal{S}$. 
Then $u \in \lpdot(\ZZ)$ if and only if
\[ \rho_{\pdot}(u) = \sum_{k \in \ZZ} {\abs{u(k)}}^{p(k)}  < \infty \]
\begin{proof}
If $\rho_{\pdot}(u) < \infty$, then by definition of norm $u  \in \ell^{\pdot}(\ZZ)$. Conversely since $u \in \lpdot(\ZZ)$ by definiton of norm, $\rho_{\pdot}(\frac{u}{\lambda}) < \infty$ for some $\lambda >0$. Further by (6) of modular functional properties, we have $\rho_{\pdot}(\frac{u}{\lambda}) < \infty$ for some $\lambda >1$. So, it follows that
\[ \rho_{\pdot}(u)= \sum_{k \in \ZZ} \left( \frac{\abs{u(k)}\lambda}{\lambda} \right)^{p(k)}   \leq \lambda^{\pplus } \rho_{\pdot}(\frac{u}{\lambda}) < \infty  \]
\end{proof}
\end{lemma}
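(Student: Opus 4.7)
The statement is the standard equivalence between membership in $\lpdot(\ZZ)$ and finiteness of the modular, so the plan is to invoke the properties collected in Lemma~\ref{sak_modular}. I would prove the two implications separately.

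For the direction $\rho_{\pdot}(u)<\infty \Rightarrow u\in\lpdot(\ZZ)$, the point is that the unit-ball definition of the Luxemburg norm requires $\rho_{\pdot}(u/\lambda)\le 1$ for some $\lambda>0$, not merely finiteness. The plan is to upgrade finiteness to the unit-ball condition using property (6) of Lemma~\ref{sak_modular}: since $\rho_{\pdot}(u/1)=\rho_{\pdot}(u)<\infty$, the function $\lambda\mapsto\rho_{\pdot}(u/\lambda)$ is continuous and decreasing on $[1,\infty)$ and tends to $0$ as $\lambda\to\infty$. Choosing $\lambda_{0}\ge 1$ large enough so that $\rho_{\pdot}(u/\lambda_{0})\le 1$ yields $\normb{u}_{\lpdot(\ZZ)}\le\lambda_{0}<\infty$.

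For the converse $u\in\lpdot(\ZZ)\Rightarrow\rho_{\pdot}(u)<\infty$, the plan is to pick, by definition of the norm, some $\lambda>0$ with $\rho_{\pdot}(u/\lambda)\le 1$, and then split into two cases depending on whether $\lambda\le 1$ or $\lambda>1$. If $\lambda\le 1$, then $u(k)\le u(k)/\lambda$ pointwise (since $u$ is non-negative), so property (5) of Lemma~\ref{sak_modular} gives $\rho_{\pdot}(u)\le\rho_{\pdot}(u/\lambda)\le 1$. If $\lambda>1$, I would factor out $\lambda$ inside the modular sum and use the bound $p(k)\le\pplus$ to write
\[
\rho_{\pdot}(u)=\sum_{k\in\ZZ}\lambda^{p(k)}\!\left(\frac{u(k)}{\lambda}\right)^{p(k)}\le\lambda^{\pplus}\rho_{\pdot}\!\left(\frac{u}{\lambda}\right)\le\lambda^{\pplus}<\infty.
\]
This is exactly the estimate already appearing in the displayed computation of the excerpt.

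The only real obstacle is the mild subtlety in the first direction (finiteness of the modular does not instantly give a unit-ball inequality), which is handled cleanly by invoking the monotonicity/continuity from part (6) of Lemma~\ref{sak_modular}. Both halves of the argument rely crucially on $\pplus<\infty$, i.e.\ on $\pdot\in\mathcal{S}$, which is what makes the passage between $\rho_{\pdot}(u)$ and $\rho_{\pdot}(u/\lambda)$ quantitative via the factor $\lambda^{\pplus}$.
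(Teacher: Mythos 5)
Your proposal is correct and follows essentially the same route as the paper: both directions rest on the modular properties of Lemma~\ref{sak_modular} and the quantitative estimate $\rho_{\pdot}(u)\le\lambda^{\pplus}\rho_{\pdot}(u/\lambda)$. The only differences are cosmetic — you invoke property (6) explicitly to get the unit-ball condition in the forward direction (the paper treats this as immediate from its set-theoretic definition of $\lpdot(\ZZ)$), and you split cases on $\lambda\le 1$ versus $\lambda>1$ where the paper simply uses property (6) to assume $\lambda>1$ outright.
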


\begin{lemma}{\label{sak_Prop_2.10}}
Let $ \left \{ a(n) , n \in \ZZ \right \} $ be a non-negative sequence of real numbers  such that $a \in \lpdot(\ZZ)$ and let $p(\cdot) \in \mathcal{S}$.
\begin{enumerate}
\item For all $\lambda \geq 1$, 
\[ \lambda^{\pminus} \rho_{\pdot}(a) \leq \rho_{\pdot}(\lambda a) \leq \lambda^{\pplus} \rho_{\pdot}(a) \] 
\item When $0 < \lambda <1$ the reverse inequalities are true, i.e 
\[ \lambda^{\pplus} \rho_{\pdot}(a) \leq \rho_{\pdot}(\lambda a) \leq \lambda^{\pminus} \rho_{\pdot}(a) \]
\end{enumerate}
\end{lemma}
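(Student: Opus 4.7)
The plan is a direct computation from the definition of the modular functional. Since we assume $\pplus < \infty$ throughout, we have
\[ \rho_{\pdot}(\lambda a) = \sum_{k \in \ZZ} |\lambda a(k)|^{p(k)} = \sum_{k \in \ZZ} \lambda^{p(k)} |a(k)|^{p(k)}, \]
so the whole question reduces to bounding the factor $\lambda^{p(k)}$ pointwise in $k$ and summing.

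For part (1), with $\lambda \geq 1$, the function $t \mapsto \lambda^{t}$ is non-decreasing, so from $\pminus \leq p(k) \leq \pplus$ we get $\lambda^{\pminus} \leq \lambda^{p(k)} \leq \lambda^{\pplus}$ for every $k \in \ZZ$. Inserting this into the sum above and pulling the constants out yields
\[ \lambda^{\pminus}\sum_{k \in \ZZ} |a(k)|^{p(k)} \leq \sum_{k \in \ZZ}\lambda^{p(k)}|a(k)|^{p(k)} \leq \lambda^{\pplus}\sum_{k \in \ZZ}|a(k)|^{p(k)}, \]
which is exactly $\lambda^{\pminus}\rho_{\pdot}(a) \leq \rho_{\pdot}(\lambda a) \leq \lambda^{\pplus}\rho_{\pdot}(a)$. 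For part (2), with $0 < \lambda < 1$, the function $t \mapsto \lambda^t$ is non-increasing, so the chain of inequalities reverses: $\lambda^{\pplus} \leq \lambda^{p(k)} \leq \lambda^{\pminus}$, and the same substitution gives $\lambda^{\pplus}\rho_{\pdot}(a) \leq \rho_{\pdot}(\lambda a) \leq \lambda^{\pminus}\rho_{\pdot}(a)$.

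There is no real obstacle here; the statement is essentially the observation that among powers $\lambda^{p(k)}$ the extremes are attained (in one direction or the other, depending on whether $\lambda \geq 1$ or $\lambda < 1$) at $\pminus$ and $\pplus$. The assumption $\pplus < \infty$ is what lets us write $\rho_{\pdot}$ as a single sum with no $\ell^{\infty}$ term to handle separately, and the hypothesis $a \in \lpdot(\ZZ)$ together with Lemma~\ref{sak_modular} guarantees the finiteness needed for the inequalities (and in particular for them to be nontrivial rather than reading $\infty \leq \infty$). The only mildly subtle point worth flagging is monotonicity of $t \mapsto \lambda^t$ in the two regimes $\lambda \geq 1$ and $0 < \lambda < 1$, which accounts for the reversal in part (2).
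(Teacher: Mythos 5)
Your proof is correct and is essentially identical to the paper's own argument: both expand $\rho_{\pdot}(\lambda a)$ as $\sum_k \lambda^{p(k)}|a(k)|^{p(k)}$ and bound the factor $\lambda^{p(k)}$ between $\lambda^{\pminus}$ and $\lambda^{\pplus}$ using the monotonicity of $t \mapsto \lambda^t$ in the two regimes. Nothing further is needed.
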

\begin{proof}
For $\lambda \geq 1$ , 
\begin{align*}
\rho_{\pdot}(\lambda a) = \sum_{n=1}^\infty \biggl( \lambda a(n) \biggr)^{p(n)} = \sum_{n=1}^\infty {\lambda}^{p(n)} {a(n)}^{p(n)} \leq \lambda^{\pplus} \rho_{\pdot}(a)
\end{align*}
Further,
\begin{align*}
{\lambda}^{\pminus} \rho_{\pdot}(a) = \sum_{n=1}^{\infty} {\lambda}^{\pminus} {a(n)}^{p(n)} \leq \sum_{n=1}^\infty  \biggl( \lambda a(n) \biggr)^{p(n)} = \rho_{\pdot}(\lambda a)
\end{align*}
Similar proof can be used for the case $0< \lambda < 1$.
\end{proof}

\begin{lemma}{\label{sak_lem7}}[Fatou Property of the Norm].
Let $ u \in \lpdot(\ZZ)$ be sequence of non-negative real numbers and $\pdot \in \mathcal{S}$.
 Further, let $ \left \{ u_k \right \} \subset \ell^{\pdot}(\ZZ)$ be non-negative sequences of real numbers such that $u_k$ increases to the sequence $u$ pointwise. 
Then $ \normb{u_k}_{\lpdot(\ZZ)} \to \normb{u}_{\lpdot(\ZZ)}$.
\end{lemma}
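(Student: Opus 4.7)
The plan is to show that $\|u_k\|_{\lpdot(\ZZ)}$ is monotone non-decreasing and bounded above by $\|u\|_{\lpdot(\ZZ)}$, so converges to some limit $L \le \|u\|_{\lpdot(\ZZ)}$, and then rule out $L < \|u\|_{\lpdot(\ZZ)}$ by a contradiction argument built on the modular functional's monotone and continuity properties (Lemma \ref{sak_modular}, item 6) combined with the monotone convergence theorem for series.

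\textbf{Step 1 (monotonicity).} Since $0 \le u_k \le u_{k+1} \le u$ pointwise, item (5) of Lemma \ref{sak_modular} gives $\rho_{\pdot}(u_k/\lambda) \le \rho_{\pdot}(u_{k+1}/\lambda) \le \rho_{\pdot}(u/\lambda)$ for every $\lambda > 0$. Plugging this into the infimum defining the norm yields $\|u_k\|_{\lpdot(\ZZ)} \le \|u_{k+1}\|_{\lpdot(\ZZ)} \le \|u\|_{\lpdot(\ZZ)} < \infty$. Hence $\|u_k\|_{\lpdot(\ZZ)} \uparrow L$ for some $L$ with $L \le \|u\|_{\lpdot(\ZZ)}$. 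The case $\|u\|_{\lpdot(\ZZ)} = 0$ is trivial via item (2) of Lemma \ref{sak_modular}, so assume $\|u\|_{\lpdot(\ZZ)} > 0$.

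\textbf{Step 2 (reverse inequality by contradiction).} Suppose $L < \|u\|_{\lpdot(\ZZ)}$ and pick $\lambda$ with $L < \lambda < \|u\|_{\lpdot(\ZZ)}$. Because $\lambda$ lies strictly below the infimum defining $\|u\|_{\lpdot(\ZZ)}$ and $\mu \mapsto \rho_{\pdot}(u/\mu)$ is decreasing and continuous on its domain of finiteness by item (6) of Lemma \ref{sak_modular}, it follows that $\rho_{\pdot}(u/\lambda) > 1$ (if it were $\le 1$, then $\lambda$ would belong to the defining set, contradicting that $\|u\|_{\lpdot(\ZZ)}$ is the infimum and $\lambda < \|u\|_{\lpdot(\ZZ)}$).

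\textbf{Step 3 (monotone convergence and conclusion).} For each $n \in \ZZ$ the non-negative terms $(u_k(n)/\lambda)^{p(n)}$ increase pointwise to $(u(n)/\lambda)^{p(n)}$, so the monotone convergence theorem for sums over $\ZZ$ yields $\rho_{\pdot}(u_k/\lambda) \to \rho_{\pdot}(u/\lambda) > 1$. Hence $\rho_{\pdot}(u_k/\lambda) > 1$ for all large $k$, and by the decreasing property of $\mu \mapsto \rho_{\pdot}(u_k/\mu)$ this forces $\|u_k\|_{\lpdot(\ZZ)} \ge \lambda > L$, contradicting $\|u_k\|_{\lpdot(\ZZ)} \to L$.

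The main delicate point is Step 2, where one has to rule out the borderline case $\rho_{\pdot}(u/\lambda) = 1$ for $\lambda$ below the norm; once the decreasing/continuous behavior of the modular in the scaling parameter (item 6 of Lemma \ref{sak_modular}) is invoked, the rest reduces to Beppo Levi on $\ZZ$ and is routine.
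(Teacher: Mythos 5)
Your proof is correct and follows essentially the same route as the paper's: monotonicity of the modular gives $\lim_k \normb{u_k}_{\lpdot(\ZZ)} \leq \normb{u}_{\lpdot(\ZZ)}$, and for $\lambda < \normb{u}_{\lpdot(\ZZ)}$ the monotone convergence theorem applied to $\rho_{\pdot}(u_k/\lambda)$ forces $\normb{u_k}_{\lpdot(\ZZ)} \geq \lambda$ for large $k$. Your contradiction framing of Steps 2--3 handles the final quantifier over $\lambda$ a bit more cleanly than the paper's set-complement argument (and your parenthetical infimum observation already settles the borderline case $\rho_{\pdot}(u/\lambda)=1$ without needing the continuity in item (6)), but the underlying idea is identical.
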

\begin{proof}
Since for every $n$, $\frac{u_k(n)}{\lambda} \leq \frac{u_{k+1}(n)}{\lambda}$, by property (5) of modular functional,
$\rho_{\pdot}(\frac{u_k}{\lambda}) \leq \rho_{\pdot}(\frac{u_{k+1}}{\lambda}) $ and hence $\normb{u_k}_{\lpdot(\ZZ)} \leq \normb{u_{k+1}}_{\lpdot(\ZZ)}$.
Therefore, $\left \{ u_k \right \}$ is an increasing sequence, so is $ \left \{ \normb{u_k}_{\lpdot(\ZZ)} \right \}$ and so this increasing sequence either converges to a finite limit or diverges to $\infty$.\\
It is required to prove $ \lim_{ k \to \infty} \normb{u_k}_{\lpdot(\ZZ)} = \normb{u}_{\lpdot(\ZZ)}$. Note that
$\normb{u_k}_{\lpdot(\ZZ)}$ is increasing and $\normb{u_k}_{\lpdot(\ZZ)} \leq \normb{u}_{\lpdot(\ZZ)}$. Hence
$\lim_{k \to \infty} \normb{u_k}_{\lpdot(\ZZ)} \leq \normb{u}_{\lpdot(\ZZ)}$.

%

Take $\lambda >0, \normb{u}_{\lpdot(\ZZ)} > \lambda$. We shall prove that if $\normb{u}_{\lpdot(\ZZ)} > \lambda$ then for sufficiently large values of $k$, $\normb{u_k}_{\lpdot(\ZZ)} > \lambda$.
Since $\rho_{\pdot}(\frac{u}{\lambda}) >1$ and using Monotone convergence theorem, 
\begin{align*} \rho_{\pdot}(\frac{u}{\lambda}) & =  \sum_{m \in \ZZ}  \biggl( \frac{ {u(m) }} {\lambda } \biggr)^{p(m)}  = 
\sum_{ m \in \ZZ} \biggl ( \frac{\lim_{k \to \infty} u_k(m) }{\lambda} \biggr)^{p(m)} \\
&= \lim_{k \to \infty} \biggl(  \sum_{m \in \ZZ}  \biggl(  \frac{ { u_k(m) } } { \lambda } \biggr)^{p(m)} \biggr) = \lim_{k \to \infty} \rho_{\pdot}(\frac{u_k} {\lambda}) 
\end{align*}
So,  $\rho_{\pdot}(\frac{u_k}{\lambda}) >1$ for sufficiently large values of $k$.
Let $ A_k = \left \{  \lambda >0 :  \rho_{\pdot}(\frac{u_k}{\lambda}) \leq 1 \right \} $ and $ B = \left \{  \lambda >0 :  \rho_{\pdot}(\frac{u}{\lambda}) \leq 1 \right \} $.
From above discussion, $ B^\complement \subseteq {A^{\complement}_k} $ for sufficiently large values of $k$.
Therefore $A_k \subseteq B$ for sufficiently large values of $k$. Hence $ \inf A_k \geq \inf B$ for sufficiently large values of $k$.
Therefore $\normb{u_k}_{\lpdot(\ZZ)}  \geq \normb{u}_{\lpdot(\ZZ)} $ for sufficiently large values of $k$.
and hence $\lim_{k \to \infty} \normb{u_k}_{\lpdot(\ZZ)}  \geq \normb{u_k}_{\lpdot(\ZZ)} $. 
\end{proof}

\begin{lemma}{\label{sak_lem8}}[Fatou's lemma for sequences].
Let $ \left \{ u_k \right \}$ be a non-negative sequence of real numbers such that $ \left \{ u_k \right \} \in \lpdot(\ZZ)$. 
Let $\pdot \in \mathcal{S}$, suppose the sequence $ \left \{ u_k \right \}  \in \ell^{\pdot}(\ZZ) $ such that $u_k(n) \to u(n) $ for every $n$. If
\[ \liminf_{k  \to \infty } \normb{u_k}_{\lpdot(\ZZ)}  < \infty \]
then $ u \in \ell^{\pdot}(\ZZ) $ and 
\[ \normb{u}_{\lpdot(\ZZ)} \leq \liminf_{k \to \infty } \normb{u_k}_{\lpdot(\ZZ)} \]
\end{lemma}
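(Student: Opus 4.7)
The plan is to pass to the limit inside the modular functional $\rho_{\pdot}$ and then translate back to the norm via the standard ``infimum'' definition. The key observation is that the hypothesis $\liminf_k \normb{u_k}_{\lpdot(\ZZ)} < \infty$ gives, for every $\lambda$ strictly larger than this $\liminf$, a subsequence along which $\rho_{\pdot}(u_k/\lambda) \leq 1$, and this modular bound should survive the pointwise limit.

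First I would establish a discrete Fatou-type statement for the modular itself: if $u_k(n) \to u(n)$ pointwise with $u_k(n), u(n) \geq 0$, then
\[ \rho_{\pdot}\!\left(\frac{u}{\lambda}\right) \leq \liminf_{k \to \infty} \rho_{\pdot}\!\left(\frac{u_k}{\lambda}\right) \qquad \text{for every } \lambda > 0. \]
Since $\pplus < \infty$, the map $t \mapsto (t/\lambda)^{p(n)}$ is continuous and non-negative for each $n$, so $(u_k(n)/\lambda)^{p(n)} \to (u(n)/\lambda)^{p(n)}$. Testing against finite subsets $F \subset \ZZ$,
\[ \sum_{n \in F} \!\left(\frac{u(n)}{\lambda}\right)^{p(n)} \!=\! \lim_{k \to \infty} \sum_{n \in F} \!\left(\frac{u_k(n)}{\lambda}\right)^{p(n)} \!\leq \liminf_{k \to \infty} \rho_{\pdot}\!\left(\frac{u_k}{\lambda}\right), \]
and then taking the supremum over all finite $F$ produces the claimed modular Fatou inequality. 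This step is elementary but is really the whole content of the lemma.

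Next, set $L = \liminf_{k \to \infty} \normb{u_k}_{\lpdot(\ZZ)}$, which by hypothesis is finite. Fix any $\lambda > L$. By definition of $\liminf$, there is a subsequence $\left \{ u_{k_j} \right \}$ with $\normb{u_{k_j}}_{\lpdot(\ZZ)} < \lambda$, and then by the definition of the Luxemburg norm we have $\rho_{\pdot}(u_{k_j}/\lambda) \leq 1$ for every $j$. Applying the modular Fatou inequality above (along this subsequence),
\[ \rho_{\pdot}\!\left(\frac{u}{\lambda}\right) \leq \liminf_{j \to \infty} \rho_{\pdot}\!\left(\frac{u_{k_j}}{\lambda}\right) \leq 1. \]
Therefore $\lambda$ belongs to the set whose infimum defines $\normb{u}_{\lpdot(\ZZ)}$, and so $\normb{u}_{\lpdot(\ZZ)} \leq \lambda$.

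Finally, letting $\lambda \downarrow L$ yields $\normb{u}_{\lpdot(\ZZ)} \leq L = \liminf_{k \to \infty} \normb{u_k}_{\lpdot(\ZZ)} < \infty$, which simultaneously proves the norm inequality and the membership $u \in \lpdot(\ZZ)$. The main (minor) obstacle is simply justifying the modular Fatou inequality carefully, since we cannot directly invoke Lemma \ref{sak_lem7} (the Fatou property of the norm) here: that lemma assumes $u \in \lpdot(\ZZ)$ a priori and requires monotone convergence, whereas here we only have pointwise convergence and wish to \emph{conclude} that $u$ lies in $\lpdot(\ZZ)$. Working with $\rho_{\pdot}$ directly as above sidesteps this issue cleanly.
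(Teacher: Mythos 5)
Your proof is correct, but it takes a genuinely different route from the paper's. The paper defines the monotone minorants $v_k(i) = \inf_{m \geq k} u_m(i)$, observes that $v_k$ increases pointwise to $u$ with $\normb{v_k}_{\lpdot(\ZZ)} \leq \inf_{m \geq k}\normb{u_m}_{\lpdot(\ZZ)}$, and then invokes the Fatou property of the norm (Lemma \ref{sak_lem7}) --- the classical reduction of Fatou's lemma to monotone convergence. You instead prove a Fatou inequality at the level of the modular, $\rho_{\pdot}(u/\lambda) \leq \liminf_{k} \rho_{\pdot}(u_k/\lambda)$, by testing against finite subsets of $\ZZ$, and then convert this into the norm inequality by fixing $\lambda > L = \liminf_k \normb{u_k}_{\lpdot(\ZZ)}$, extracting a subsequence with $\normb{u_{k_j}}_{\lpdot(\ZZ)} < \lambda$ so that $\rho_{\pdot}(u_{k_j}/\lambda) \leq 1$, and letting $\lambda \downarrow L$. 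Both arguments are sound, and your chain of small steps (continuity of $t \mapsto t^{p(n)}$ using $\pplus < \infty$, monotonicity of the modular to get $\rho_{\pdot}(u_{k_j}/\lambda) \leq 1$ from $\normb{u_{k_j}}_{\lpdot(\ZZ)} < \lambda$) all check out. What your route buys is self-containedness: it avoids Lemma \ref{sak_lem7} entirely, which matters because, as you note, that lemma is stated for $u \in \lpdot(\ZZ)$ while here membership of $u$ is part of the conclusion; the paper's argument is repairable, since the half of Lemma \ref{sak_lem7}'s proof actually used does not need that hypothesis, but your version sidesteps the issue. What the paper's route buys is brevity on the page, since it reuses an already-proved lemma and its construction $v_k = \inf_{m \geq k} u_m$ packages the subsequence extraction automatically.
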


\begin{proof}
Define,
\[ v_k(i) = \inf_{ m \geq k} {u_m(i)} \]
Then for all $ m \geq k, v_k(i) \leq {u_m(i) }$ and this shows that $ v_k \in \ell^{\pdot}(\ZZ)$. Since $ \left \{ v_k \right \}$ is an increasing sequence and
\[ \lim_{k \to \infty } v_k(i) = \liminf_{ m \to \infty}  {u_m(i)} =  {u(i)}, \quad i \in \ZZ \]
Also,
\[ \normb{u}_{\lpdot(\ZZ)} = \normb{ \lim_{k \to \infty} v_k }_{\lpdot(\ZZ)} = \lim_{k \to \infty} \normb{v_k}_{\lpdot(\ZZ)}  =  \liminf_{k \to \infty} \normb{v_k}_{\lpdot(\ZZ)}  \]
Therefore, by Fatou's norm property[$\ref{sak_lem7}$] for  sequences
\[ \normb{u}_{\lpdot(\ZZ)} = \lim_{k \to \infty} \normb{v_k}_{\lpdot(\ZZ)} \leq \lim_{k \to \infty} ( \inf_{ m \geq k} \normb{u_m}_{\lpdot(\ZZ)} ) = \liminf_{k \to \infty} \normb{u_k}_{\lpdot(\ZZ)} \]
So, if $ \liminf_{k \to \infty} \normb{u_k}_{\lpdot(\ZZ)}  < \infty$, then $\normb{u}_{\lpdot(\ZZ)} < \infty$, which implies $ u \in \lpdot(\ZZ)$.
\end{proof}

The following lemma shows normalizing a sequence gives $\rho_{\pdot}(\frac{a}{\normb{a}_{\lpdot(\ZZ)  }}) \leq 1$. Additionally if $\pplus < \infty$, $\rho_{\pdot}(\frac{a}{\normb{a}_{\lpdot(\ZZ)  }}) = 1$.
\begin{lemma}
Let $ \left \{ a(k) \right \} $ be a non-negative sequence of real numbers and $\pdot \in \mathcal{S}$
\begin{enumerate}
\item If $ a \in \ell^{p(\cdot)}({\ZZ})$ and $ \normb{a}_{\lpdot(\ZZ)} > 0$, then $\rho_{\pdot}(\frac{a}{\normb{a}_{\lpdot(\ZZ)  }}) \leq 1$ \\
\item If $\pplus < \infty$, then $ \rho_{\pdot}(\frac{a}{\normb{a}_{\lpdot(\ZZ)  }}) =1$ for all nontrivial $a \in \ell^{p(\cdot)}({\ZZ})$
\end{enumerate}
\end{lemma}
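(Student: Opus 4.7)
For part (1), the plan is to extract a minimizing sequence from the infimum defining the norm and pass to the limit by monotone convergence. Setting $\lambda := \normb{a}_{\lpdot(\ZZ)}>0$, the definition of the norm furnishes a decreasing sequence $\lambda_n \downarrow \lambda$ with each $\lambda_n$ lying in $\{\mu>0 : \rho_{\pdot}(a/\mu)\le 1\}$, i.e.\ $\rho_{\pdot}(a/\lambda_n) \le 1$. Since $a(k)\ge 0$, the pointwise quantities $a(k)/\lambda_n$ increase to $a(k)/\lambda$, and the map $t \mapsto t^{p(k)}$ is increasing on $[0,\infty)$, so $(a(k)/\lambda_n)^{p(k)} \uparrow (a(k)/\lambda)^{p(k)}$ for each $k$. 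Monotone convergence for non-negative series then gives
\[
\rho_{\pdot}(a/\lambda) \;=\; \sum_{k\in\ZZ} (a(k)/\lambda)^{p(k)} \;=\; \lim_{n\to\infty}\rho_{\pdot}(a/\lambda_n) \;\le\; 1,
\]
which proves (1).

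For part (2), my approach is proof by contradiction, exploiting the scaling estimate from Lemma \ref{sak_Prop_2.10} and the hypothesis $\pplus < \infty$. Suppose $\rho_{\pdot}(a/\lambda) < 1$ where $\lambda=\normb{a}_{\lpdot(\ZZ)}>0$ (nontrivial $a$ guarantees $\lambda>0$ by (1) applied to the trivial direction). For any $0<t<1$, write $a/(t\lambda) = (1/t)(a/\lambda)$; since $1/t>1$ and $\pplus<\infty$, Lemma \ref{sak_Prop_2.10} yields
\[
\rho_{\pdot}\bigl(a/(t\lambda)\bigr) \;\le\; (1/t)^{\pplus}\,\rho_{\pdot}(a/\lambda).
\]
Because $(1/t)^{\pplus}\to 1$ as $t\to 1^{-}$ and $\rho_{\pdot}(a/\lambda)<1$, one can choose $t$ close enough to $1$ to make the right-hand side at most $1$. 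Then $t\lambda$ belongs to $\{\mu>0: \rho_{\pdot}(a/\mu)\le 1\}$, yet $t\lambda < \lambda$, contradicting $\lambda$ being the infimum. Hence $\rho_{\pdot}(a/\lambda) = 1$.

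The argument is largely routine; the only genuine technical care needed is in part (1), to justify taking the limit inside the sum (handled by monotone convergence of non-negative terms, using the monotonicity of $t\mapsto t^{p(k)}$ for each fixed $k$). In part (2), the hypothesis $\pplus < \infty$ is essential: it provides the uniform exponent bound that makes the scaling factor $(1/t)^{\pplus}$ tend to $1$, which would fail if the exponent were allowed to be unbounded. No other obstacle is expected.
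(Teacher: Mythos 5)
Your proposal is correct and follows essentially the same route as the paper: part (1) extracts a minimizing sequence $\lambda_n \downarrow \normb{a}_{\lpdot(\ZZ)}$ and passes to the limit term by term (you invoke monotone convergence where the paper invokes Fatou's lemma, an immaterial difference here), and part (2) is the same contradiction via the scaling estimate of Lemma \ref{sak_Prop_2.10}, producing an admissible $\lambda$ strictly below the infimum.
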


\begin{proof}
(1) By definition$\normb{a}_{\lpdot(\ZZ)} = \inf \left \{ \lambda > 0: \rho_{\pdot}(a/\lambda) \leq 1 \right \}$, $\normb{a}_{\lpdot(\ZZ)}+ \frac{1}{n}$ is not an infimum. Therefore, there exists a $\lambda_n$ such that
$\lambda_n \leq \normb{a}_{\lpdot(\ZZ)} + \frac{1}{n}$ and $ \rho_{\pdot}(\frac{a}{\lambda_n}) \leq 1$.
Fix such a decreasing sequence $\left \{ \lambda_n \right \}$ such that $ \left \{ \lambda_n \right \} \to \normb{a}_{\lpdot(\ZZ)}$. Then by Fatou's lemma and the definition of modular functional,
\begin{align*}
\rho_{\pdot}(\frac{a}{\normb{a}_{\lpdot (\ZZ)}}) & = \sum_{k \in \ZZ} \biggl( \frac{\abs{a(k)}}{\normb{a}_{\lpdot(\ZZ) }} \biggr)^{p(k)}\\
& = \sum_{k \in \ZZ} \lim_{n \to \infty} \biggl( \frac{\abs{a(k)}}{\lambda_n} \biggr)^{p(k)} \\
& \leq \liminf_{n \to \infty}   \sum_{k \in \ZZ}  \biggl( \frac{\abs{a(k)}}{\lambda_n} \biggr)^{p(k)} \\
& = \liminf_{n \to \infty} \rho_{\pdot}(\frac{a}{\lambda_n}) \leq 1
\end{align*}
(2). Assume $\pplus < \infty$ but $\rho(\frac{a}{\normb{a}_{\lpdot(\ZZ)}}) < 1$. Then $\forall \lambda$ such that $0< \lambda < \normb{a}_{\lpdot(\ZZ)}$, by Lemma[\ref{sak_Prop_2.10}]
\[ \rho_{\lpdot(\ZZ)}(a/\lambda) = \rho_{\pdot}( \frac{ \normb{a}_{\lpdot(\ZZ)}}{\lambda} \frac{a}{\normb{a}_{\lpdot(\ZZ)}}) \leq \biggl( \frac{\normb{a}_{\lpdot(\ZZ)}}{\lambda} \biggr)^{\pplus} \rho_{\pdot}(\frac{a}{\normb{a}_{\lpdot(\ZZ)}}) \]
So,a $\lambda$ can be found, sufficiently close to $\normb{a}_{\lpdot(\ZZ)}$ such that $\rho_{\pdot}(a/\lambda)<1$. 
However norm definition requires that $\rho_{\pdot}(a/\lambda) \geq 1$. 

\end{proof}

Using Lemma[$\ref{sak_lem8}$], properties of modular functional and homogenity of the norm, following lemma can be proved. Continuous version of Lemma[$\ref{sak_corollary_2}$] can be found in $\text{\cite{fior_var_book}}$.
\begin{lemma}{\label{sak_corollary_2}}
Let  $\pdot \in \mathcal{S}$ and $ \left \{ a(k) \right \} $ be a non-negative sequence of real numbers such that $ \left \{ a(k) \right \} \in \lpdot(\ZZ)$
\begin{align*}
\text{If} \quad \normb{a}_{\lpdot(\ZZ)} \leq 1, \text{then} \qquad \rho_{\pdot}(a) \leq \normb{a}_{\lpdot(\ZZ)} \\
\text{If} \quad \normb{a}_{\lpdot(\ZZ)} \geq 1, \text{then} \qquad \rho_{\pdot}(a) \geq \normb{a}_{\lpdot(\ZZ)}
\end{align*}
\end{lemma}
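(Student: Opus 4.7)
The plan is to reduce both inequalities to the normalization identity $\rho_{\pdot}(a/\normb{a}_{\lpdot(\ZZ)}) = 1$ supplied by the immediately preceding lemma (which applies because $\pplus < \infty$ is a standing hypothesis of the paper), and then read off the comparison with $\normb{a}_{\lpdot(\ZZ)}$ using the scaling inequalities recorded in Lemma \ref{sak_Prop_2.10}.

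Set $\lambda := \normb{a}_{\lpdot(\ZZ)}$. First I would dispose of the trivial case $\lambda = 0$, in which $a \equiv 0$ forces $\rho_{\pdot}(a) = 0$, and both claims hold with equality. From now on assume $\lambda > 0$, so that $a/\lambda \in \lpdot(\ZZ)$ and $\rho_{\pdot}(a/\lambda) = 1$ by the preceding lemma. The trick is then to write $a = \lambda \cdot (a/\lambda)$ and apply the appropriate half of Lemma \ref{sak_Prop_2.10} to the sequence $a/\lambda$, the choice being determined by whether $\lambda$ is below or above $1$.

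For the first assertion assume $0 < \lambda \leq 1$. Part (2) of Lemma \ref{sak_Prop_2.10} applied to $a/\lambda$ yields
\[ \rho_{\pdot}(a) \;=\; \rho_{\pdot}\!\left( \lambda \cdot (a/\lambda) \right) \;\leq\; \lambda^{\pminus} \rho_{\pdot}(a/\lambda) \;=\; \lambda^{\pminus}. \]
Since $\pminus \geq 1$ and $0 < \lambda \leq 1$, the elementary inequality $\lambda^{\pminus} \leq \lambda$ closes this case. For the second assertion assume $\lambda \geq 1$ and invoke part (1) of the same lemma:
\[ \rho_{\pdot}(a) \;=\; \rho_{\pdot}\!\left( \lambda \cdot (a/\lambda) \right) \;\geq\; \lambda^{\pminus} \rho_{\pdot}(a/\lambda) \;=\; \lambda^{\pminus} \;\geq\; \lambda, \]
where the final inequality is again the elementary fact that $\lambda^{\pminus} \geq \lambda$ when $\lambda \geq 1$ and $\pminus \geq 1$.

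I do not anticipate any genuine obstacle here: the whole argument hinges on having the \emph{equality} $\rho_{\pdot}(a/\normb{a}_{\lpdot(\ZZ)}) = 1$ (rather than just ``$\leq 1$''), and this is exactly what the preceding lemma supplies under the standing assumption $\pplus < \infty$. The only fiddly point to keep track of is that the inequality in Lemma \ref{sak_Prop_2.10} flips direction as the scalar crosses $1$, and that on both sides of that threshold it is the lower exponent $\pminus$ (not $\pplus$) which produces the tight comparison between $\lambda^{\pminus}$ and $\lambda$ needed to conclude.
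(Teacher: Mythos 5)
Your proof is correct. The paper itself does not write out a proof of this lemma---it only remarks that it follows from the Fatou property, the properties of the modular, and homogeneity of the norm, deferring to the continuous version in the cited book---so there is no in-paper argument to match line by line. Your route, normalizing to get $\rho_{\pdot}(a/\normb{a}_{\lpdot(\ZZ)})=1$ and then transporting that back with the $\lambda^{\pminus}$/$\lambda^{\pplus}$ scaling bounds of Lemma~\ref{sak_Prop_2.10}, is sound: the inequality directions and the comparison of $\lambda^{\pminus}$ with $\lambda$ on each side of $1$ are handled correctly, and the normalization \emph{equality} is legitimately available since $\pplus<\infty$ is a standing hypothesis (indeed built into the definition of $\mathcal{S}$). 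Two small remarks. First, the more standard argument (the one in the paper's cited source) uses convexity of the modular instead: since $\rho_{\pdot}(0)=0$, convexity gives $\rho_{\pdot}(\theta u)\le\theta\,\rho_{\pdot}(u)$ for $0\le\theta\le1$, which yields the first inequality directly from $\rho_{\pdot}(a/\normb{a}_{\lpdot(\ZZ)})\le 1$ (no equality needed) and the second by applying it to $a/\mu$ for $1\le\mu<\normb{a}_{\lpdot(\ZZ)}$, where $\rho_{\pdot}(a/\mu)>1$ by definition of the infimum, and letting $\mu$ increase to $\normb{a}_{\lpdot(\ZZ)}$; that version does not lean on $\pplus<\infty$. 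Second, part (2) of Lemma~\ref{sak_Prop_2.10} as stated covers only $0<\lambda<1$, so the boundary case $\lambda=1$ of your first assertion should be noted separately (it is trivial, since then $\rho_{\pdot}(a)=1=\normb{a}_{\lpdot(\ZZ)}$). Neither point is a gap.
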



Using Lemma[$\ref{sak_Prop_2.10}$], we can prove the following lemma below. Continuous version of Lemma[$\ref{sak_corollary_3}$] can be found in $\text{\cite{fior_var_book}}$. Same line of proof works here.
\begin{lemma}{\label{sak_corollary_3}}
Let $ \left \{ a(k) \right \} $ be a non-negative sequence of real numbers such that $ \left \{ a(k) \right \} \in \lpdot(\ZZ)$  and $\pdot \in \mathcal{S}$. Then
\begin{enumerate}
\item If $ \normb{a}_{\lpdot(\ZZ)} >1$, then $\rho_{\pdot}(a)^{1/\pplus} \leq \normb{a}_{\lpdot(\ZZ)} \leq \rho_{\pdot}(a)^{1/\pminus}$ \\
\item If $ 0 < \normb{a}_{\lpdot(\ZZ)} \leq 1 $, then $\rho_{\pdot}(a)^{1/\pminus} \leq \normb{a}_{\lpdot(\ZZ)} \leq \rho_{\pdot}(a)^{1/\pplus}$ \\
\end{enumerate}
\end{lemma}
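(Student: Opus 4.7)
The plan is to derive both inequalities by reducing everything to the identity $\rho_{\pdot}(a/\normb{a}_{\lpdot(\ZZ)}) = 1$ (which holds whenever $\pplus<\infty$, by the preceding lemma on normalization) together with the scaling estimates of Lemma \ref{sak_Prop_2.10}. Set $\lambda_0 = \normb{a}_{\lpdot(\ZZ)}$ and let $b = a/\lambda_0$, so that $\rho_{\pdot}(b)=1$ and $a = \lambda_0 b$.

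For part (1), assume $\lambda_0 > 1$. Applying the first assertion of Lemma \ref{sak_Prop_2.10} to the non-negative sequence $b$ with scalar $\lambda_0\ge 1$, I get
\[
\lambda_0^{\pminus}\,\rho_{\pdot}(b) \;\le\; \rho_{\pdot}(\lambda_0 b) \;\le\; \lambda_0^{\pplus}\,\rho_{\pdot}(b),
\]
which, using $\rho_{\pdot}(b)=1$ and $\rho_{\pdot}(\lambda_0 b)=\rho_{\pdot}(a)$, becomes $\lambda_0^{\pminus} \le \rho_{\pdot}(a) \le \lambda_0^{\pplus}$. Since $\pminus \le \pplus < \infty$ and $\lambda_0>0$, I can take the $\pminus$-th root of the left inequality and the $\pplus$-th root of the right to conclude
\[
\rho_{\pdot}(a)^{1/\pplus} \;\le\; \lambda_0 \;\le\; \rho_{\pdot}(a)^{1/\pminus},
\]
which is the desired estimate.

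For part (2), assume $0 < \lambda_0 \le 1$. I now invoke the second assertion of Lemma \ref{sak_Prop_2.10} (the reversed form, valid for $0<\lambda<1$) applied to $b$ with scalar $\lambda_0$:
\[
\lambda_0^{\pplus}\,\rho_{\pdot}(b) \;\le\; \rho_{\pdot}(\lambda_0 b) \;\le\; \lambda_0^{\pminus}\,\rho_{\pdot}(b).
\]
Again using $\rho_{\pdot}(b)=1$ and $\rho_{\pdot}(\lambda_0 b)=\rho_{\pdot}(a)$, this gives $\lambda_0^{\pplus} \le \rho_{\pdot}(a) \le \lambda_0^{\pminus}$; extracting the $\pplus$-th and $\pminus$-th roots yields $\rho_{\pdot}(a)^{1/\pminus} \le \lambda_0 \le \rho_{\pdot}(a)^{1/\pplus}$, as claimed.

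The only point that needs care is the boundary case $\lambda_0 = 1$, which lies in the intersection of the two regimes; there both statements collapse to $\rho_{\pdot}(a)^{1/\pplus} \le 1 \le \rho_{\pdot}(a)^{1/\pminus}$, which is consistent with $\rho_{\pdot}(a)=1$ as guaranteed by the normalization lemma. No serious obstacle arises: the argument is almost entirely bookkeeping of exponents, and the genuine content is absorbed into Lemma \ref{sak_Prop_2.10} and the normalization identity. The assumption $\pplus<\infty$ built into $\mathcal{S}$ is essential for both $\rho_{\pdot}(a/\lambda_0)=1$ and for the finite-power bounds on $\rho_{\pdot}(\lambda b)$.
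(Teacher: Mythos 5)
Your proof is correct and follows exactly the route the paper intends: the paper omits the argument but states that it follows from Lemma \ref{sak_Prop_2.10} (together with the normalization identity $\rho_{\pdot}(a/\normb{a}_{\lpdot(\ZZ)})=1$, valid since $\pplus<\infty$ for $\pdot\in\mathcal{S}$), which is precisely what you do. The handling of the boundary case $\normb{a}_{\lpdot(\ZZ)}=1$ is a nice touch, and no gaps remain.
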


\section{Calderon-Zygmund decompostion for Sequences}
\begin{theorem}{\label{CD_General}}
Let $1\leq p < \infty$ and a $\in l^p(\ZZ)$. For every $t>0$, and $0\leq \alpha<1$, there exists a sequence of disjoint intervals $\left \{ I_j^t \right \}$ such that
\begin{align*}
(i)& \quad t < \frac{1}{ \abs{I_j^t}^{1-\alpha}   } \sum_{k \in I_j^t} \abs{a(k)} \leq 2t,\forall j \in \ZZ \\
(ii)& \quad \forall n  \not\in \cup_j I_j^t, \quad \abs{a(n} \leq t \\
(iii)& \quad \text{If} \quad t_1 > t_2 , \quad \textbf{then each} \quad I_j^{t_1} \quad \textbf{is subinterval of some} \quad I_m^{t_2}, \quad \forall j,m \in \ZZ
\end{align*}
\end{theorem}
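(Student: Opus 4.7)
The plan is to imitate Fiorenza's Calder\'on--Zygmund stopping-time construction~\cite{fior_new_proof_maximal} on the dyadic intervals of $\ZZ$. Let $\mathcal{D}$ denote the standard dyadic intervals of $\ZZ$, i.e.\ sets of the form $[m2^k,(m+1)2^k-1]$ with $k\ge 0$ and $m\in\ZZ$; every two such intervals are nested or disjoint, and each $I\in\mathcal{D}$ of length $2^k$ has a unique parent $\widehat{I}\in\mathcal{D}$ of length $2^{k+1}$. Write $A(I):=|I|^{\alpha-1}\sum_{k\in I}|a(k)|$ for the fractional average.

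First, I would invoke H\"older's inequality against the hypothesis $a\in \ell^{p}(\ZZ)$ to get the bound $A(I)\le |I|^{\alpha-1/p}\|a\|_{\ell^p}$, hence $A(I)\to 0$ as $|I|\to\infty$ in the relevant parameter range. For every $n\in\ZZ$ this forces the collection of dyadic intervals $I\ni n$ with $A(I)>t$ to be finite, so a unique maximal such ancestor $I(n)$ exists whenever at least one $I\in\mathcal{D}$ with $I\ni n$ satisfies $A(I)>t$. Take $\{I_j^t\}_j$ to be the distinct intervals of the form $I(n)$, re-indexed by $\ZZ$.

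Next I would verify (i)--(iii). Disjointness follows from maximality together with the nesting property of $\mathcal{D}$. For (i), the lower bound is built into the definition of $I_j^t$; for the upper bound, by maximality the parent $\widehat{I_j^t}$ satisfies $A(\widehat{I_j^t})\le t$, and since $|\widehat{I_j^t}|=2|I_j^t|$ and $\sum_{\widehat{I_j^t}}|a|\ge \sum_{I_j^t}|a|$,
\[
A(I_j^t)\;\le\;2^{1-\alpha}\,A(\widehat{I_j^t})\;\le\;2^{1-\alpha}\,t\;\le\;2t,
\]
the final inequality because $\alpha\in[0,1)$. For (ii), if $n\notin \bigcup_j I_j^t$ then in particular the singleton $\{n\}\in\mathcal{D}$ was not selected, so $|a(n)|=A(\{n\})\le t$. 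For (iii), given $t_1>t_2$, each $I_j^{t_1}$ satisfies $A(I_j^{t_1})>t_1>t_2$; running the stopping procedure at level $t_2$ selects either $I_j^{t_1}$ itself (when its parent already has $A\le t_2$) or some strict dyadic ancestor of $I_j^{t_1}$, in either case yielding an $I_m^{t_2}\supseteq I_j^{t_1}$.

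The main obstacle is ensuring that the stopping procedure actually terminates, i.e.\ that for every $n$ with $|a(n)|>t$ a maximal dyadic ancestor with $A>t$ exists. This is precisely the decay $A(I)\to 0$ on large dyadic intervals, which is immediate from H\"older when $\alpha<1/p$, and at the borderline must be complemented by a direct tail estimate using the $\ell^p$-summability of $a$ at infinity. A minor secondary point is that the dyadic construction actually produces the sharper constant $2^{1-\alpha}$ (rather than $2$) in the upper bound of (i), which is still within the stated range because $\alpha\in[0,1)$.
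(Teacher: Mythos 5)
Your proof is correct and takes essentially the same route as the paper: both are dyadic stopping-time selections, the paper working top-down from a starting scale $N_t$ at which every fractional average is at most $t$ and subdividing, you working bottom-up by taking maximal dyadic intervals with $A(I)>t$ --- these produce the same collection, and your verification of (i)--(iii), including the $2^{1-\alpha}t\le 2t$ bound via the parent interval, matches the paper's argument. The termination issue you flag is the one point the paper glosses over (it asserts the existence of $N_t$ ``as $a\in\ell^p$'' without proof): the decay $A(I)\to 0$ for large $|I|$ holds for $\alpha\le 1/p$ (with the tail estimate you describe at the borderline) but genuinely fails for $\alpha>1/p$ (e.g.\ $a(k)=|k|^{-\beta}$ with $1/p<\beta<\alpha$), so your caveat identifies a real restriction on the theorem as stated, albeit one that is harmless in the paper's applications where $p_+<1/\alpha$ is assumed.
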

\begin{proof}
For each positive integer N, consider the collection of disjoint intervals of cardinality $2^N$, \\
\begin{eqnarray*}
\left \{ I_{N,j} \right \} = \left \{ [ (j-1)2^N +1, \dots, j2^N] \right \} , j \in \ZZ.
\end{eqnarray*} \\
For each $t>0$, let $N=N_t$ be the smallest positive integer such that 
\[ \frac{1}{\abs{I_{N_t,j}     }^{1-\alpha}   } \sum_{k \in I_{N_t,j}} \abs{a(k)} \leq t \] 	\\
Such $N_t$ is possible as $ a\in \ell^p (\ZZ)$. Now consider collection $ \left \{ I_{N_t,j} \right \}$ and subdivide each of these intervals into two intervals of equal cardinality. If $I$ is one of these intervals either 
\begin{align*}
(A)& \quad \frac{1}{\abs{I}^{1-\alpha} } \sum_{k \in I} \abs{a(k)} >t \\  \quad \text{or}  \quad
(B)& \quad \frac{1}{\abs{I}^{1-\alpha} } \sum_{k \in I} \abs{a(k)} \leq t 
\end{align*}
In case (A) we select this interval and include it in a collection $\left \{ I_{r,j} \right \}$. \\
In case (B) we subdivide I once again unless I is a singleton and select intervals as above. 
Now the elements which are not included in $\left \{ I_{r,j} \right \}$ form a set S such that for every $n \in S$, $\abs{a(n)} \leq t$. This proves (i).  \\
Also from the choice of $\left \{ I_{r,j} \right \}$, note that $\left \{ I_{r,j} \right \}$ are disjoint and satisfy 
\begin{eqnarray*}
\frac{1}{\abs{I_{r,j}   }^{1-\alpha} } \sum_{k \in I_{r,j}} \abs{a(k)} >t
\end{eqnarray*}
Since each $I_{r,j}$ is contained in an interval $J_0$ with card $J_0 = \abs{2I_{r,j} }^{1-\alpha}$, which is not selected in the previous step, we have
\[
\frac{1}{\abs{I_{r,j}}^{1-\alpha} } \sum_{K \in I_{r,j}} \abs{a(k)} \leq \frac{2}{\abs{J_0 }} \sum_{k \in J_0} \abs{a(k)} \leq 2t.
\]
This proves (ii). It remains to prove (iii). \\
If $t_1 > t_2$ then $N_{t_1} \leq N_{t_2}$. So each $I_{N_{t_1}, j}$ is contained in some 
$I_{N_{t_2}, j}$. In the subdivision and the selecting process for $t_1$ we have 
\[
\frac{1}{\abs{I_{N_{t_1,j}  } }^{1-\alpha} } \sum_{k \in I_{N_{t_1,j} }} \abs{a(k)} > t_1 > t_2.
\]
So, if $I_j^{t_1}$ is not one of the intervals $I_m^{t_2}$, then it must be subinterval of some $I_m^{t_2}$ selected in an earlier step. This completes proof.
\end{proof}

\begin{theorem}{\label{CD_Weak}}
Let $1\leq p < \infty$ and a $\in l^p(\ZZ)$. Let $\left \{ I_j^t \right \}$ be intervals obtained from Calderon Zygmund decomposition at height $t$ and $ 0\leq \alpha <1 $. Then
\[ \left \{ n: M_{\alpha}a(n) > 9t \right \} \subseteq \cup_j 2I_j^t \]
\end{theorem}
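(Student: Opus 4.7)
The plan is to prove the contrapositive: whenever $n\notin \bigcup_j 2I_j^t$, one has $M_\alpha a(n)\le 9t$. Fix such an $n$, let $I$ be an arbitrary interval of integers containing $n$, and bound $|I|^{\alpha-1}\sum_{k\in I}|a(k)|$ by $9t$.

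Begin by splitting the sum in the standard Calderon--Zygmund fashion:
\begin{equation*}
\sum_{k\in I}|a(k)| \;=\; \sum_{k\in I\setminus\bigcup_j I_j^t}|a(k)| \;+\; \sum_{j: I_j^t\cap I\neq\emptyset}\sum_{k\in I\cap I_j^t}|a(k)|.
\end{equation*}
For the first (``good'') piece, property (ii) of Theorem \ref{CD_General} gives $|a(k)|\le t$ pointwise, contributing at most $t|I|$. For each term of the second (``bad'') piece, property (i) yields $\sum_{k\in I_j^t}|a(k)|\le 2t|I_j^t|^{1-\alpha}$, and a crucial geometric observation bounds the scale of any selected interval that meets $I$: if $I_j^t\cap I\neq\emptyset$ and $n\in I\setminus 2I_j^t$, then $|I_j^t|\le 2|I|$. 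Indeed, the exclusion $n\notin 2I_j^t$ forces $\mathrm{dist}(n,I_j^t)>|I_j^t|/2$, while $n\in I$ together with $I\cap I_j^t\neq\emptyset$ forces $\mathrm{dist}(n,I_j^t)\le|I|$. Consequently every intersecting $I_j^t$ lies inside a bounded dilate of $I$, and the disjointness of the family $\{I_j^t\}$ becomes available.

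Assembling these pieces yields
\begin{equation*}
\sum_{k\in I}|a(k)| \;\le\; t|I| + 2t\sum_{j: I_j^t\cap I\neq\emptyset}|I_j^t|^{1-\alpha},
\end{equation*}
and multiplying by $|I|^{\alpha-1}$ should deliver the bound $9t$ once the dilation constants are accounted for. The principal obstacle I anticipate is exactly this last estimate: for $0<\alpha<1$ the function $x\mapsto x^{1-\alpha}$ is concave and subadditive, so the bound $\sum_j|I_j^t|\le C|I|$ that suffices for the case $\alpha=0$ no longer extracts the correct $|I|^{1-\alpha}$ factor. The natural remedy is to exploit the dyadic structure inherent in Theorem \ref{CD_General} --- every selected $I_j^t$ has a dyadic parent whose fractional average is at most $t$, which controls the selected intervals of each given size --- and to group the bad part by scale. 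The discrete setting helps here, since the built-in lower bound $|I_j^t|\ge 1$ limits how finely the selected family can disperse, and this is presumably one of the ``remarkable differences'' between integer and real-line arguments alluded to in the introduction.
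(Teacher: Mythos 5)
Your decomposition is exactly the one the paper uses (contrapositive, good/bad split along $\bigcup_j I_j^t$, and the observation that any $I_j^t$ meeting $I$ while $n\in I\setminus 2I_j^t$ has size controlled by $|I|$), but the proposal stops precisely where the proof still has to be made, and the obstacle you flag at the end is a genuine gap rather than a bookkeeping issue. Indeed it is worse than you state: for $0<\alpha<1$ \emph{both} terms fail, not just the bad one. The good part is bounded only by $t|I|$ via the pointwise estimate $|a(k)|\le t$, and $|I|^{\alpha-1}\cdot t|I|=t|I|^{\alpha}$ is unbounded in $|I|$; and for the bad part, the disjoint intervals $I_j^t\subset 4I$ can all be singletons, so $\sum_j |I_j^t|^{1-\alpha}$ can be of order $|I|$ and again only $t|I|^{\alpha}$ comes out. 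So as written the argument establishes the theorem for $\alpha=0$ only.

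The remedy you gesture at (use the dyadic stopping structure by scale) is the right one, and it handles both terms simultaneously, so it should be carried out rather than deferred: cover $I$ by $O(1)$ dyadic intervals $J$ of the generation with $|J|\le |I|<2|J|$. Each such $J$ is either (a) an interval that was examined and not selected, in which case the stopping inequality gives $\sum_{k\in J}|a(k)|\le t|J|^{1-\alpha}\le t|I|^{1-\alpha}$ for the \emph{entire} sum over $J$, good and bad points together, or (b) contained in a single selected $I_j^t$, which then satisfies $I_j^t\subset 4I$, so property (i) gives $\sum_{k\in J}|a(k)|\le 2t|I_j^t|^{1-\alpha}\le 2t\,(4|I|)^{1-\alpha}$. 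Summing over the $O(1)$ covering intervals yields $|I|^{\alpha-1}\sum_{k\in I}|a(k)|\le Ct$ with an absolute constant (which this route does not obviously keep at $9$; the case where $|I|$ exceeds the starting scale $2^{N_t}$ also needs separate care). Until such a step is written out the proposal does not prove the stated theorem. For what it is worth, your diagnosis is sharper than the paper's own argument, which derives $\sum_{k\in I}|a(k)|\le 9t|I|$ and then asserts $M_{\alpha}a(n)\le 9t$ --- a conclusion that follows only when $\alpha=0$ --- so the step you identified as the principal obstacle is exactly the step the paper leaves unjustified.
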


\begin{proof}
We apply Calderon-Zygmund decomposition to the sequence $\left \{ a(n), n \in \ZZ \right \} $, where $a \in \ell^p(\ZZ)$. For every $t > 0$, we get a sequence of disjoint intervals $ \left \{ I^t_j \right \}$ satisfying the criteria of Calderon - Zygmund decompostion. 
For each $j$, we have
\[ \frac{1}{{\abs{I^t_j}}^{1-\alpha}} \sum_{k \in I^t_j} \abs{a(k)} > t \]
Therefore, $\cup_j I^t_j \subseteq \left \{n : M_{\alpha} a(n) >t \right \}$.
 Let $n \notin \cup_j 2 I^t_j$ and $I$ be any interval which contains $n$. Then 
\[ \sum_{k \in I} \abs{a(k)} = \sum_{ k \in I \cap (\cup_j I^t_j) } \abs{a(k)} +  \sum_{ I \cap (\cup_j I^t_j)^{\complement} } \abs{a(k)}  = S_1 + S_2 \]
To estimate $S_1$, we observe a simple geometric fact. If $ I \cap I^t_j$ is non-empty  and I is not contained in $ 2 I^t_j$, then $I^t_j \subset 4I$. Since $n \in I$ and $n \notin 2 I^t_j$, for each $j$, I is not contained in $ 2I^t_j$ for each $j$. Also, note that $S_2 \leq \abs{I}$.
Therefore, 
\begin{align*}
S_1 &\leq \sum_{ \left \{ j: I^t_j \subseteq 4I \right \} }  \sum_{ k \in I^t_j} \abs{a(k)} \\
& \leq \sum_{ \left \{ j: I^t_j \subseteq 4I \right \} } 2t  \abs{I^t_j }\\
& \leq 2t \abs{4I} \\
& \leq 8t \abs{I}.  
\end{align*} 
Hence, $ \sum_{ k \in I} \abs{a(k)} \leq S_1 + S_2 \leq 9t \abs{I} $. 

Since $I$ was an arbritrary interval containing $n$, we have $ M_{\alpha}a(n) \leq 9t $. Therefore
\[ ( \cup_j 2 I^t_j )^{\complement} \subseteq \left \{ n: M_{\alpha}a(n) \leq 9t \right \} \]
\end{proof}

\begin{lemma}{\label{CD_Template}}
Let $1\leq p < \infty, a \in l^p(\ZZ)$ and $\qdot \in \mathcal{S}$. Let $\left \{ I^k_j \right \}$ be intervals obtained from Calderon-Zygmumd decomposition at height $(9t)^{k-1}$ and $t >0$. For any maximal operator $M_{\alpha}, 0 \leq \alpha < 1$, and for every $0<t< 1/9$,
\[\sum_{i \in \ZZ} {M_{\alpha} a(i)}^{q(i)}  \leq A^{\qminus}2^{\qplus (\alpha-1)} \sum_{k,j} \sum_{E^k_j} \biggl( \frac{1}{\abs{2I^k_j}^{1-\alpha}} \sum_{i \in {2I^k_j}} a(i) \biggr)^{q(i)} \]
where $A$ is chosen based on $t$.
\end{lemma}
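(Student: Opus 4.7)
The idea is to decompose the support of $M_\alpha a$ into disjoint layers coming from the super-level sets of $M_\alpha a$ at the geometric heights $9(9t)^{k-1}$, and to refine each layer using the Calderon-Zygmund intervals at the same height. Because $0<t<1/9$ gives $9t<1$, the heights $(9t)^{k-1}$ decrease in $k$, and by property (iii) of Theorem~\ref{CD_General} the sets $\Omega_k:=\bigcup_{j}2I^k_j$ are nested with $\Omega_{k-1}\subseteq\Omega_k$ and $\bigcup_{k\in\ZZ}\Omega_k\supseteq\{i\in\ZZ:M_\alpha a(i)>0\}$.

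The first step is to apply Theorem~\ref{CD_Weak} at height $(9t)^{k-1}$ for each $k$:
\[
\{i\in\ZZ : M_\alpha a(i) > 9(9t)^{k-1}\}\;\subseteq\;\Omega_k.
\]
The contrapositive yields $M_\alpha a(i)\le 9(9t)^{k-2}$ for every $i\notin\Omega_{k-1}$. I then write the support of $M_\alpha a$ as the disjoint union of the annuli $\Omega_k\setminus\Omega_{k-1}$, and further split each annulus into disjoint pieces $E^k_j\subseteq 2I^k_j$ by assigning each $i$ in the annulus to a unique $j$ with $i\in 2I^k_j$ (say, the smallest such $j$). The resulting family $\{E^k_j\}_{k,j}$ is a disjoint partition of the support of $M_\alpha a$, so
\[
\sum_{i\in\ZZ}M_\alpha a(i)^{q(i)}\;=\;\sum_{k,j}\,\sum_{i\in E^k_j}M_\alpha a(i)^{q(i)}.
\]

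The second step is to trade the pointwise level bound for an averaged bound over $2I^k_j$. For $i\in E^k_j$, on the one hand $M_\alpha a(i)\le 9(9t)^{k-2}=t^{-1}(9t)^{k-1}$. On the other, property (i) of Theorem~\ref{CD_General} applied to $I^k_j$, combined with $\abs{2I^k_j}^{1-\alpha}=2^{1-\alpha}\abs{I^k_j}^{1-\alpha}$ and the nonnegativity of $a$, yields
\[
\frac{1}{\abs{2I^k_j}^{1-\alpha}}\sum_{l\in 2I^k_j}a(l)\;\geq\;2^{\alpha-1}\,\frac{1}{\abs{I^k_j}^{1-\alpha}}\sum_{l\in I^k_j}a(l)\;>\;2^{\alpha-1}(9t)^{k-1}.
\]
Dividing these two inequalities produces a pointwise bound of the form
\[
M_\alpha a(i)\;\leq\;A\cdot\frac{1}{\abs{2I^k_j}^{1-\alpha}}\sum_{l\in 2I^k_j}a(l),
\]
where $A$ depends only on $t$ and $\alpha$ (explicitly, a constant multiple of $2^{1-\alpha}/t$). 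Raising to the exponent $q(i)$, using $\qminus\le q(i)\le\qplus$ to replace each constant factor raised to $q(i)$ by its maximal value (which picks up $\qminus$ or $\qplus$ in the exponent depending on whether the base lies below or above $1$), and summing over $k$, $j$ and $i\in E^k_j$ produces the stated inequality.

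The main technical obstacle is the disjoint-partition bookkeeping: one must verify that the annular decomposition via the nested sets $\Omega_k$ exhausts $\{M_\alpha a>0\}$ (so no mass is dropped on the left-hand side) and that each refinement $E^k_j$ sits inside a single $2I^k_j$ (so the averaged bound applies uniformly on $E^k_j$). The nestedness of the $\Omega_k$, supplied by Theorem~\ref{CD_General}(iii), is the key ingredient. The remaining work is careful bookkeeping of the constants, taking care that the variable exponent $q(i)$ is replaced by either $\qminus$ or $\qplus$ on each constant factor in the direction that produces a valid upper bound.
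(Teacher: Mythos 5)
Your proposal is correct and follows essentially the same route as the paper: Calderon--Zygmund decomposition at the geometric heights $(9t)^{k-1}$, a layered decomposition of the support of $M_\alpha a$ into nested level sets refined into disjoint pieces $E^k_j\subseteq 2I^k_j$, the pointwise upper bound on $M_\alpha a$ traded against the lower bound on the average over $2I^k_j$ from property (i), and the $\qminus/\qplus$ bookkeeping on the constants. The only differences are cosmetic (you take $\Omega_k=\bigcup_j 2I^k_j$ and invoke Theorem~\ref{CD_Weak} and nestedness explicitly, while the paper defines $\Omega_k$ directly as a super-level set), and your resulting constant is an equivalent repackaging of the paper's $A^{\qminus}2^{\qplus(\alpha-1)}$.
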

\begin{proof}
Let $A \in \RR$ be such that $A =  9t < 1 $ for every $t > 0$. Define $\Omega_{k} = \left \{ i \in \ZZ: M_{\alpha}a(i) > A^{k} =(9t)^{k} \right \}$. 
For each integer $k$, apply Theorem[$\ref{CD_General}$]  Calderon-Zygmund decomposition for sequence $\left \{ a(i) \right \}$, at height $t =A^{k-1}$ to get
pairwise disjoint cubes $ \left \{ I_j^k \right \} $ such that 
\begin{align}
\Omega_k \subset  \cup_j 2 I_j^k \label{eq:eq1}\tag{CD1}  \\
\frac{1}{ {\abs{I^k_j} }^{1-\alpha} } \sum_{{i \in I^k_j}} a(i) > A^{k-1}   \label{eq:eq2}\tag{CD2}
\end{align}

From \eqref{eq:eq2} we get \\
\begin{align*}
\frac{1}{\abs{2I_j^k}^{1-\alpha}} \sum_{i \in 2I_j^k} a(i)  > \frac{1}{2^{1-\alpha}} A^{k-1} 
\end{align*}

Define sets inductively as follows: 
\begin{align*}
E_1^k = \biggl( \Omega_{k+1} \setminus \Omega_{k} \biggr) \cap 2I_1^k \nonumber \\
E_2^k = \biggl( \biggl( \Omega_{k+1} \setminus \Omega_{k} \biggr) \cap 2I_2^k \biggr) \setminus E_1^k  \nonumber\\
E_3^k = \biggl( \biggl( \Omega_{k+1} \setminus \Omega_{k}\biggr) \cap 2I_3^k  \setminus (E_1^k \cup E_2^k ) \biggr)  \nonumber\\
\ldots
E_m^k = \biggl( \biggl( \Omega_{k+1} \setminus \Omega_{k}\biggr) \cap 2I_{m}^k  \setminus (E_1^k \cup E_2^k  \ldots E_{m-1}^k) \biggr)  \nonumber\\
\end{align*}
Then, sets $E_j^k$ are pairwise disjoint for all $j$ and $k$ and satisfy for every $k$
\[\Omega_{k+1} \setminus \Omega_{k} = \bigcup_j E_j^k \label{eq:eq4} \tag{CD3} \]
So, $\ZZ = \bigcup_k \biggl( \Omega_{k+1}\setminus \Omega_{k} \biggr) =  \bigcup_k \bigcup_j E_j^k $.
Further, note $\Omega_{k+1} \setminus \Omega_{k} = \left \{ A^{k+1} < M_{\alpha} a(i) < A^k \right \}$. \\
We now estimate $ M_{\alpha}a$ as follows, noting $A < 1$,
\begin{align*}
    \sum_{i \in \ZZ} {M_{\alpha} a(i)}^{q(i)}
    & = \sum_k \sum_{\Omega_{k+1} \setminus \Omega_{k}} {M a(i)}^{q(i)} \\
    & \leq \sum_k \sum_{\Omega_{k+1} \setminus \Omega_{k}} [A^{k}]^{q(i)} \\
    & \leq A^{\qminus}2^{\qplus(\alpha-1)} \sum_{k,j} \sum_{E^k_j} \biggl( \frac{1}{\abs{2I^k_j}^{1-\alpha}} \sum_{i \in 2I^k_j} a(i) \biggr)^{q(i)}
\end{align*}
\end{proof}

\section{Fractional Hardy-Littlewood Maximal operator}

\subsection{Strong $(\pdot, \pdot)$ Inequality for Fractional Hardy-Littlewood Maximal Operator}
In this section, we derive strong $(\pdot, \pdot)$ inequality for fractional Hardy-Littlewood maximal Operator. In order to prove this theorem, we use Lemma[$\ref{CD_Template}$].
 
 \begin{theorem}[Strong $(\pdot, \pdot)$ Inequality for Fractional Hardy-Littlewood Maximal Operator]
Given a non-negative sequence $ \left \{ a(i) \right \} \in \lpdot(\ZZ), 0\leq \alpha < 1$, let $ \pdot \in \mathcal{S},\pplus< \infty,  \pminus> 1 , \pdot \in LH_\infty(\ZZ), \qdot \in LH_\infty(\ZZ), \text{and} \quad 1< \pminus \leq \pplus < \frac{1}{\alpha} $. Define the exponent function $\qdot$ by 
\[ \frac{1}{p(k)} - \frac{1}{q(k)} = \alpha ,  k \in \ZZ\] Then
\[ \normb{M_{\alpha} a}_{\ell^{q(.)}(\ZZ)} \leq C \normb{a}_{\ell^{p(.)}(\ZZ)} \]
\end{theorem}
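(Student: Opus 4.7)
By the homogeneity of $\|\cdot\|_{\ell^{p(\cdot)}(\ZZ)}$ and Lemma \ref{sak_corollary_2}, it suffices to prove the modular inequality $\rho_{q(\cdot)}(M_\alpha a) \leq C$ under the normalization $\|a\|_{\ell^{p(\cdot)}(\ZZ)} \leq 1$. This normalization forces $\rho_{p(\cdot)}(a) \leq 1$ and hence $0 \leq a(l) \leq 1$ for every $l \in \ZZ$; this pointwise bound is the crucial leverage for the log-Hölder estimates below.

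Next, apply Lemma \ref{CD_Template} at a fixed $t \in (0, 1/9)$, so that
\[\rho_{q(\cdot)}(M_\alpha a) \leq C \sum_{k,j} \sum_{i \in E^k_j} \biggl(\frac{1}{|2I^k_j|^{1-\alpha}} \sum_{l \in 2I^k_j} a(l)\biggr)^{q(i)}.\]
Set $I := 2I^k_j$. For each $i \in E^k_j \subset I$ the pair $(p(i), q(i))$ satisfies $1/p(i) - 1/q(i) = \alpha$ with $1 < p(i) < 1/\alpha$, so Lemma \ref{sak_holder_lem1}, applied on $I$ with $(p,q) = (p(i), q(i))$ and raised to the $q(i)$-th power, gives
\[\biggl(\frac{1}{|I|^{1-\alpha}} \sum_{l \in I} a(l)\biggr)^{q(i)} \leq \biggl(\sum_{l \in I} a(l)^{p(i)}\biggr)^{\alpha q(i)} \biggl(\frac{1}{|I|} \sum_{l \in I} a(l)\biggr)^{p(i)}.\]
Since $i \in I$, the second factor is dominated by $(Ma(i))^{p(i)}$, where $M = M_0$ is the ordinary Hardy-Littlewood maximal operator.

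The core technical step is to bound the first factor by a universal constant. Using the $LH_\infty$ hypothesis on $p(\cdot)$ (equivalent via Lemma \ref{sak_lem_LH_infty} to the one on $q(\cdot)$) together with $0 \leq a(l) \leq 1$, I would invoke the standard Diening-type pointwise comparison
\[a(l)^{p(i)} \leq C\bigl(a(l)^{p(l)} + (e + |l|)^{-N p_-} + (e + |i|)^{-N p_-}\bigr),\]
for $N$ fixed sufficiently large. Summing over $l \in I$ yields $\sum_{l \in I} a(l)^{p(i)} \leq \rho_{p(\cdot)}(a) + E(i) \leq 1 + E(i)$ with $E(i)$ a bounded, $i$-summable error. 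Because $\alpha q(i) \leq \alpha q_+$ is bounded, the first factor is then uniformly bounded by a constant.

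Substituting back and using that the $E^k_j$ are pairwise disjoint with $\bigcup_{k,j} E^k_j = \ZZ$ (proved inside Lemma \ref{CD_Template}), I obtain
\[\rho_{q(\cdot)}(M_\alpha a) \leq C \sum_{i \in \ZZ} (Ma(i))^{p(i)} + C' = C \rho_{p(\cdot)}(Ma) + C',\]
and the proof closes by invoking the strong-type boundedness of $M$ on $\ell^{p(\cdot)}(\ZZ)$ already established in the paper (via log-Hölder continuity and the classical constant-exponent bound for $M$), which gives $\rho_{p(\cdot)}(Ma) \leq C''$ whenever $\rho_{p(\cdot)}(a) \leq 1$. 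I expect the main obstacle to be the delicate log-Hölder bookkeeping: $N$ must be chosen large enough so that both the $l$-indexed and $i$-indexed error series converge, while keeping all constants dependent only on the structural parameters $\pminus$, $\pplus$, $\alpha$, $C_\infty$, and $p_\infty$, and independent of $a$ and of the Calderón-Zygmund intervals.
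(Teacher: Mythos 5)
Your overall skeleton (normalize $\normb{a}_{\lpdot(\ZZ)}=1$, invoke Lemma \ref{CD_Template}, apply the H\"older-type Lemma \ref{sak_holder_lem1} on each $2I^k_j$, and reduce the averaged factor to the Hardy--Littlewood maximal operator) is the same as the paper's. The decisive difference is your choice of exponents in the H\"older step, and that is where the argument has a genuine gap. You apply Lemma \ref{sak_holder_lem1} with the \emph{variable} pair $(p(i),q(i))$, which leaves the factor $\bigl(\sum_{l\in I}a(l)^{p(i)}\bigr)^{\alpha q(i)}$: the exponent is evaluated at the point $i$ while the summation variable is $l$. The pointwise comparison you invoke,
\[ a(l)^{p(i)} \leq C\bigl(a(l)^{p(l)} + (e+|l|)^{-N\pminus} + (e+|i|)^{-N\pminus}\bigr), \]
is not a consequence of $LH_\infty$ and fails in general. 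Running the standard dichotomy: if $a(l)>(e+|l|)^{-N}$ and $p(i)<p(l)$, then $a(l)^{p(i)}\leq a(l)^{p(l)}(e+|l|)^{N(p(l)-p(i))}$, and $LH_\infty$ only gives $p(l)-p(i)\leq C_\infty/\log(e+|l|)+C_\infty/\log(e+|i|)$; the cross term $(e+|l|)^{NC_\infty/\log(e+|i|)}=\exp\bigl(NC_\infty\log(e+|l|)/\log(e+|i|)\bigr)$ is unbounded when $|i|$ stays bounded and $|l|\to\infty$, which occurs for intervals $2I^k_j$ near the origin. (On $\RR^n$ this cross-point step is rescued by \emph{local} log-H\"older continuity via $|Q|^{-|p(x)-p(y)|}\leq C$; on $\ZZ$ the local condition is vacuous and gives nothing for long intervals, so only same-point comparisons with a constant exponent are available.) Hence your bound $\sum_{l\in I}a(l)^{p(i)}\leq 1+E(i)$ with controlled $E(i)$ does not hold, and the first H\"older factor is not uniformly bounded.

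The paper avoids this entirely by applying Lemma \ref{sak_holder_lem1} with the \emph{constant} limit exponents $(p_\infty,q_\infty)$, which also satisfy $1/p_\infty-1/q_\infty=\alpha$ by $LH_\infty$. The first factor then becomes $\sum_{l}a(l)^{p_\infty}$, and every exponent adjustment is between $p(l)$ (resp.\ $q(m)$) and the constant $p_\infty$ (resp.\ $q_\infty$) \emph{at the same point}, which is exactly what Lemma \ref{sak_lem1} supplies under $LH_\infty$; the leftover power $p_\infty q(m)/q_\infty$ on the averaged term is likewise corrected to $p_\infty$ by a second same-point application of Lemma \ref{sak_lem1}, after which boundedness of $M$ on the constant-exponent space $\ell^{p_\infty}(\ZZ)$ closes the argument. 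Your final reduction to $\rho_{\pdot}(Ma)$ would be acceptable if the first factor were under control, but as written the proof does not go through; you should replace the variable-exponent H\"older step by the $(p_\infty,q_\infty)$ version.
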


\begin{proof}

Now, we are going to prove $ \normb{M_{\alpha} a}_{\lqdot(\ZZ)} \leq c \normb{a}_{\lpdot(\ZZ)}$. We may assume without loss of generality that $\normb{a}_{\lpdot(\ZZ)} =1$. We will show that there exist a constant $\lambda_2=\lambda_2(\pdot) > 0$ such that $\rho_{\qdot}(M_{\alpha}\frac{a}{\lambda_2}) \leq 1$. 
For this it suffices to prove $ \rho_{\qdot}( \alpha_2 \beta_2 \gamma_2 \delta_2 M_{\alpha} a) \leq \frac{1}{2}$ form some non-negative real numbers $\alpha_2,\beta_2, \gamma_2,\delta_2 $.
Let $ {\lambda_2}^{-1} = \alpha_2 \beta_2 \gamma_2 \delta_2 $.  Then 
\[ \rho_{\qdot}( \alpha_2 \beta_2 \gamma_2 \delta_2 M_{\alpha} a) = \sum_{m \in \ZZ} [\alpha_2 \beta_2 \gamma_2 \delta_2 M_{\alpha} a(m)]^{q(m)} \]
To estimate this term we perform 
Calderon Zygmund decomposition for sequences $\left \{ a(k) \right \}$ and use Lemma[$\ref{CD_Template}$] for the fractional Hardy-Littlewood maximal operator $M_{\alpha}$. We will show  that  $\rho_{\qdot}( \alpha_2 \beta_2 \gamma_2 \delta_2 M_{\alpha} a)  \leq \frac{1}{2} $ for suitable choices of $ \alpha_2, \beta_2, \gamma_2, \delta_2$.
If we set $\alpha_2 = A^\qminus 2^{\qplus{(\alpha-1)}}$ and using Lemma[$\ref{CD_Template}$] for $M_{\alpha}$, we get
\[\sum_{m \in \ZZ} [ \alpha_2 \beta_2 \gamma_2 \delta_2 M_{\alpha} a(m) ]^{q(m)} \leq \sum_{k,j} \sum_{E^k_j} \biggl( \beta_2 \gamma_2 \delta_2 \abs{2I^k_j}^{\alpha-1} \sum_{r \in 2I^k_j} a(r) \biggr)^{q(m)}  \label{eq:eq3}\tag{CD3}  \]
 We have to estimate right hand side of $\eqref{eq:eq3}$. At this point, we note that $q_\infty < \infty$. Let $ g_2(r) = {a(r)}^{p(r) } $, then $\eqref{eq:eq3}$ becomes,
\[ \sum_{k,j} \sum_{E^k_j} \biggl( \beta_2 \gamma_2 \delta_2 \abs{ 2I^k_j}^{\alpha -1} \sum_{r \in 2I^k_j} {g_2(r)}^{\frac{1}{p(r)}}  \biggr)^{q(m)} \]

Note, since $\frac{1}{\pdot} \in LH_\infty(\ZZ)$, it follows that the exponents $p_\infty, q_\infty$ satisfy $ \frac{1}{p_\infty} - \frac{1}{q_\infty} = \alpha $. Hence, using Lemma $\ref{sak_lem1}$  with exponents $p_\infty, q_\infty$.  
\begin{align*}
& \abs{2I^k_j}^{\alpha -1} \sum_{ r \in 2I^k_j} {g_2(r)}^{\frac{1}{p(r)}}  
& \leq \biggl( \sum_{r \in 2I^k_j} {g_2(r)}^{\frac{ p_\infty } { p(r)}} \biggr)^{ \frac{1}{p_\infty} - \frac{1}{q_\infty} } 
\biggl( \frac{1}{\abs{2I^k_j} }\sum_{r \in 2I^k_j} {g_2(r)}^{\frac{1}{p(r)}} \biggr)^{\frac{p_\infty}{q_\infty} } \label{eq:eq4}\tag{CD4}  \\
\end{align*}

Further, note the following estimates.
\begin{enumerate}
\item 
${g_2(r)}^{p_\infty} \leq 1$. since $g_2(r)^{p_\infty} =  {a(r)}^{p(r) p_\infty} \leq 1$ since $ a(r) \leq 1$ \\

\item 
Using $R(k) = (e+ \abs{k})^{-N}$, with $N >1$. By taking $N$ large enough, it follows that
\[ \sum_{m \in \ZZ} {R(m)}^{\frac{1}{p_\infty}} \leq \sum_{m \in \ZZ} {R(m)}^{\frac{1}{q_\infty}} \leq 1 \]
By taking $N$ large, the sum i.e $ \sum_{m \in \ZZ}  R(m) \leq 1 $. \\

\item Also, note that 
\[ \sum_{r \in 2I^k_j} g_2(r) \leq \sum_{r \in 2I^k_j} a_2(r)^{p(r)} \leq \sum_{r \in \ZZ} a(r)^{p(r)} \leq \normb{a}_{\lpdot(\ZZ)} =1 \] \\

\item  Since ${g_2(k)}^{p_\infty} \leq 1$. Put $ F = {g_2(k)}^{p_\infty} $, so that $F \leq 1$. Hence, we use following form of Lemma[$\ref{sak_lem1}$] , where $\frac{1}{r(\cdot)} $ is taken as $LH_\infty$ constant.
\[ \sum_{m \in \ZZ} F(m)^{\frac{1}{p(m)}} \leq C \sum_{m \in \ZZ} F(m)^{\frac{1}{p_\infty}}  + C \sum_{m \in \ZZ} {R(m)}^{\frac{1}{p_\infty}} \]
Since $ \frac{1}{\pdot} \in LH_\infty(\ZZ) $ and using Lemma[$\ref{sak_lem1}$] with exponents $p_\infty, q_\infty$ with $ F= {g_2(r)}^{p_\infty} \leq 1$, based on estimates [1-4], it follows that
\begin{align*}
\sum_{\ZZ} \biggl( g_2(k)^{p_\infty} \biggr)^{\frac{1}{p(m)} } & \leq C \sum_{m \in \ZZ} \biggl( g_2(k)^{p_\infty} \biggr)^{\frac{1}{p_\infty}} + C \sum_{m \in \ZZ} R(m)^{\frac{1}{p_\infty}} \\
&= C \sum_{m \in \ZZ} g_2(k) + C \sum_{m \in \ZZ} R(m)^{\frac{1}{p_\infty}} \leq C
\end{align*}
Therefore $\eqref{eq:eq4}$ is
\begin{align*}
 \abs{2I^k_j}^{\alpha -1} \sum_{ r \in 2I^k_j} {g_2(r)}^{\frac{1}{p(r)}}  
& \leq \biggl( \sum_{r \in 2I^k_j} {g_2(r)}^{\frac{ p_\infty } { p(r)}} \biggr)^{ \frac{1}{p_\infty} - \frac{1}{q_\infty} } \biggl( \frac{1}{\
\abs{2I^k_j} }\sum_{r \in 2I^k_j} {g_2(r)}^{\frac{1}{p(r)}} \biggr)^{\frac{p_\infty}{q_\infty} } \\
& \leq C^{\alpha} \biggl( \frac{1}{\abs{2I^k_j} }\sum_{r \in 2I^k_j} {g_2(r)}^{\frac{1}{p(r)}} \biggr)^{\frac{p_\infty}{q_\infty} }
\end{align*}

\end{enumerate}

Therefore, using estimates [1-4], we can choose constant $\beta_2 >0 $  such that
\begin{align*}
\sum_{k,j} \sum_{E^k_j} & \biggl( \beta_2 \gamma_2 \delta_2 \abs{ 2I_j}^{\alpha-1} {g_2(r)}^{\frac{1}{p(r)}} \biggr)^{q(m)}  \leq \sum_{k,j} \sum_{E^k_j} \gamma_2 \delta_2 \biggl(  \biggl( \frac{1}{\abs{2I^k_j}} \sum_{r \in 2I^k_j} {g_2(r)}^{\frac{1}{p(r)}} \biggr)^{p_\infty} \biggr)^{\frac{q(m)}{q_\infty} }
\end{align*}

Note, $ \frac{1}{\abs{2I^k_j}} \sum_{r \in 2I^k_j} {g_2(r)}^{\frac{p_\infty} { p(r)} } \leq \frac{1}{\abs{2I^k_j}} \sum_{r \in \ZZ} {g_2(r)}^{\frac{p_\infty} { p(r)} } \leq 1 $. \\
So, let $ F = \biggl(  g_2(k)^{\frac{1}{p(k)}} \biggr)^{q(k) p_\infty} \leq \biggl( \sum_{r \in \ZZ} g_2(k)^{\frac{1}{p(k)}} \biggr)^{q(k) p_\infty} \leq 1$. \\
 Using Lemma[$\ref{sak_lem1}$] and with $\frac{1}{\qdot} \in LH_\infty(\ZZ)$, we get
\[ \sum_{k \in \ZZ}  F^{\frac{1}{q_\infty}} \leq  C_1 \sum_{k \in \ZZ}  F^{\frac{1}{q(k)}}  + C_2 \sum_{k \in \ZZ}  R^{\frac{1}{q_\infty}} \]
Hence,
\begin{align*}
F^{\frac{1}{q_\infty}} &\leq \sum_{k \in \ZZ}  F^{\frac{1}{q_\infty}} \leq C_1 \sum_{k \in \ZZ}  F^{\frac{1}{q(k)}} + C_2 \sum_{k \in \ZZ}  R^{\frac{1}{q_\infty}} \\
& = C_1 \biggl( \sum_{k \in \ZZ}  g_2(k)^{\frac{1}{p(k)}} \biggr)^{p_\infty}  + C_2 \sum_{k \in \ZZ}  R^{\frac{1}{q_\infty}} 
\end{align*}

Therefore,
\begin{align*}
\biggl( \sum_{k \in \ZZ}  {g_2(k)}^{\frac{1}{p(k)}} \biggr)^{q(k) \frac{p_\infty}{q_\infty} } & \leq C_1 \biggl( \sum_{k \in \ZZ}  {g_2(k)}^{\frac{1}{p(k)}} \biggr)^{p_\infty} + C_2 \sum_{k \in \ZZ} {R(k)}^{\frac{1}{q_\infty}} \\
&\leq C_1 \biggl( \sum_{k \in \ZZ}  {g_2(k)}^{\frac{1}{p(k)}} \biggr)^{p_\infty} + C_2 \sum_{k \in \ZZ} {R(k)}^{\frac{1}{q_\infty}} 
\end{align*}
and so,
\begin{align*}
\sum_{k,j} \sum_{E^k_j}  \gamma_2 \delta_2 & \biggl( \biggl( \frac{1}{\abs{2I^k_j}} \sum_{k \in 2I^k_j} g_2(k)^{\frac{1}{p(k)}} \biggr)^{p_\infty} \biggr)^{\frac{q(m)}{q_\infty}} \\
&\leq \delta_2  \biggl( C_1 \biggl( \frac{1}{\abs{2I^k_j}} \sum_{k \in 2I^k_j} {g_2(k)}^{\frac{1}{p(k)}} \biggr)^{p_\infty}  + C_2 \sum_{k \in \ZZ} {R(k)}^{\frac{1}{q_\infty}} 
\end{align*}

Take $\gamma_2>0$ such that $C_1 =1, C_2 = \frac{1}{6}$. Then
\begin{align*}
\sum_{k,j} \sum_{E^k_j} & \gamma_2 \delta_2 \biggl( \biggl( \frac{1}{\abs{2I^k_j}} \sum_{k \in 2I^k_j} g_2(k)^{\frac{1}{p(k)}} \biggr)^{p_\infty} \biggr)^{\frac{q(m)}{q_\infty}}  \\
&\leq \sum_{k,j} \sum_{E^k_j} \delta_2 \biggl( \biggl (\frac{1}{\abs{2I^k_j}} \sum_{k \in 2I^k_j} g_2(k)^{\frac{1}{p(k)}} \biggl)^{p_\infty} \biggr) + \frac{1}{6} \sum_{k \in \ZZ} {R(k)}^{\frac{1}{q_\infty}}  \\
& \leq \sum_{\ZZ} \delta_2 {M g_2(\cdot)}^{\frac{1}{\pdot}} (k)^{p_\infty} + \frac{1}{6}  
\end{align*}

Note that the maximal operator is bounded on $\ell^{p_\infty} (\ZZ)$, since $p_\infty \geq \pminus > 1$.
Again apply Lemma[$\ref{sak_lem1}$] to get \\
\[ \sum_{k \in \ZZ} M( g_2(\cdot)^{\frac{1}{\pdot}} ) (k)^{p_\infty} \leq C \sum_{k \in \ZZ} {g_2(\cdot)}^{\frac{p_\infty}{p(k)}} \leq C \sum_{k \in \ZZ} g_2(k) + C \sum_{k \in \ZZ} R(k)^{\frac{1}{p_\infty}} \leq C \]
Finally, note  $a(\cdot)$ is $\ell^{p_\infty} (\ZZ)$ integrable, ${g_2(\cdot)}^{\frac{1}{\pdot}}$ is also $\ell^{p_\infty} (\ZZ)$ integrable. So, we can choose $\delta_2 >0$ such that
\[ \sum_{k \in \ZZ} \delta_2 M(g_2(\cdot)^{\frac{1}{\pdot}} ) (k)^{p_\infty} + \frac{1}{6} \leq \frac{1}{3} + \frac{1}{6} = \frac{1}{2} \]
\end{proof}

\subsection{Weak $(\pdot, \pdot)$  Inequality for Fractional Hardy-Littlewood Maximal Operator}
\begin{theorem}[Weak $(\pdot, \pdot)$  Inequality for Fractional Hardy-Littlewood Maximal Operator]
Given a non-negative sequence $ \left \{ a(i) \right \} \in \ell^{\pdot}(\ZZ)$, let $ \pdot \in \mathcal{S},\pplus< \infty,  \pminus= 1 , 1 \leq \pminus \leq \pplus < \frac{1}{\alpha} \quad \text{and} \quad \pdot \in LH_\infty(\ZZ)$. Then
\[ \sup_{t > 0}  t \normb{ \chi_{ \left \{ M_{\alpha} a(k	)  >9t \right \} }}_{\lqdot (\ZZ)} \leq C \normb{a}_{\lpdot (\ZZ)} \]
\end{theorem}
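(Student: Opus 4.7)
The plan is to mirror the strong-type argument but with a decisive simplification: we work at a \emph{single} CZ height $t$, whereas Lemma \ref{CD_Template} iterated over the dyadic levels $A^{k-1}$. First, by homogeneity of the norm and by Lemma \ref{sak_corollary_2}–\ref{sak_corollary_3}, reduce to the case $\normb{a}_{\lpdot(\ZZ)}=1$ and show that there exists a universal $\lambda>0$ (depending only on $\pdot$ and $\alpha$) with $\rho_{\qdot}(\lambda t \chi_E)\le 1$, where $E=\{n:M_\alpha a(n)>9t\}$. By Lemma \ref{sak_lem_LH_infty}, the hypothesis $\pdot \in LH_\infty(\ZZ)$ propagates to $\qdot$, so all the log-Hölder machinery of the strong-type proof remains available, and the exponent $q_\infty$ makes sense with $\tfrac{1}{p_\infty}-\tfrac{1}{q_\infty}=\alpha$.

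Next, apply Theorem \ref{CD_Weak} at height $t$ to get disjoint intervals $\{I_j^t\}$ with $E\subseteq\bigcup_j 2I_j^t$, and insert the CZ lower bound (i) from Theorem \ref{CD_General}, namely $t < \abs{I_j^t}^{\alpha-1}\sum_{m\in I_j^t}a(m)$, to obtain
\[
\rho_{\qdot}(\lambda t\chi_E)\;\le\;\sum_j\sum_{k\in 2I_j^t}\left(\lambda\,\abs{I_j^t}^{\alpha-1}\sum_{m\in I_j^t}a(m)\right)^{q(k)}.
\]
This right-hand side has exactly the shape of the dyadic sum treated in Lemma \ref{CD_Template} and the strong-type proof, but now the sets $2I_j^t$ play the role of $E_j^k$ and there is no extra layer of nested cubes. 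From here the treatment copies that proof step by step: substitute $g(r)=a(r)^{p(r)}$, apply Lemma \ref{sak_holder_lem1} with the constant exponents $p_\infty,q_\infty$ to bound the average $\abs{I_j^t}^{\alpha-1}\sum g(r)^{1/p(r)}$ by a product of a factor controlled by $\sum_r g(r)\le 1$ and a factor $\bigl(|I_j^t|^{-1}\sum g^{1/p}\bigr)^{p_\infty/q_\infty}$, and then exploit $LH_\infty$ for $1/\qdot$ (via the same exponent-transfer lemma as used in the strong-type proof) to replace $q(k)$ with $q_\infty$ at the cost of a summable correction $R(k)=(e+\abs{k})^{-N}$.

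The sole structural difference from the strong-type proof is the choice of $\lambda$, which now must be made to absorb the factor coming from the CZ bound $t\le 2|I_j^t|^{\alpha-1}\sum_{I_j^t}a$ uniformly in $t$; the disjointness of $\{I_j^t\}$ and the normalization $\sum_r g(r)\le 1$ handle the final summation in $j$.

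The main obstacle is handling the case $\pminus = 1$. In the strong-type theorem the closing step used the $\ell^{p_\infty}(\ZZ)$-boundedness of $M$, which relied on $p_\infty \ge \pminus > 1$. Here $\pminus$ may equal $1$, so when $p_\infty = 1$ no strong $\ell^{p_\infty}$ bound on $M$ is available, and the argument must instead rely directly on the CZ estimate $t\cdot \abs{I_j^t}^{1-\alpha}\le \sum_{m\in I_j^t}a(m)$ together with the disjointness of $\{I_j^t\}$ and $\sum_r g(r)\le 1$. This is precisely why the CZ decomposition is the right tool at the weak-type endpoint: it replaces the missing strong maximal inequality and gives the weak estimate directly.
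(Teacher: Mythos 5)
Your proposal follows essentially the same route as the paper's proof: a single-height Calder\'on--Zygmund decomposition at level $t$, reduction to the modular estimate $\rho_{\qdot}(\lambda t\chi_E)\le 1$, insertion of the CZ lower bound, the H\"older-type Lemma~\ref{sak_holder_lem1} with the limit exponents $p_\infty,q_\infty$, the $LH_\infty$ exponent transfer of Lemma~\ref{sak_lem1}, and---exactly as you identify---replacing the $\ell^{p_\infty}(\ZZ)$ maximal bound (unavailable at the endpoint $\pminus=1$) by Jensen's inequality together with the disjointness of the intervals and the normalization $\sum_r g(r)\le 1$. The only cosmetic difference is that the paper sums over disjoint sets $E_j\subset 2I_j$ with $\Omega=\cup_j E_j$ rather than over the possibly overlapping doubled intervals $2I_j^t$ themselves, which spares it the bounded-overlap constant your version would need to track.
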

\begin{proof}
Denote by $ \Omega = \left \{ k \in \ZZ: M_\alpha a(k) > 9t \right \}$.
 Using Calderon-Zygmund decomposition for sequence $\left \{ a(k) \right \}$ by Lemma[$\ref{CD_Weak}$], we get  for fixed $ t > 0$, 
\[ \left \{ k \in \ZZ: M_\alpha a(k) > 9t \right \} =   \cup_j 2I_j\]
Now, consider disjoint sets $E_j$ such that $ E_j \subset 2I_j$ and $\Omega = \cup_j E_j$.
To prove the weak inequality, it will suffice to show that for each $k \in \Omega$, $ t \normb{\chi_{\Omega}(k)}_{\qdot} \leq C$ and in turn it will suffice to show that for some $\alpha_2 >0$,
\[ \rho_{\qdot}(\alpha_2 t \chi_{\Omega} ) = \sum_{k \in \Omega}  [\alpha_2 t]^{q(m)} \leq 1 \]
We will show that each term on the right is bounded by $\frac{1}{2}$ for suitable choice of $\alpha_2$.
To estimate
$ \sum_{k \in \Omega} [\alpha_2 t]^{q(m)}$, we note from results in previous section, 
\begin{align*}
\sum_{k \in \Omega} [ \alpha_2 t]^{q(m)} & \leq \sum_{j} \sum_{E_j} \delta_2 \biggl( \frac{1}{\abs{2I_j}}  \sum_{k \in 2I_j}  {g_2(k)}^{\frac{1}{p(k)}} \biggr)^{p_\infty} + \frac{1}{6}  \\
& \leq \sum_{j} \sum_{E_j} \delta_2 \biggl( \frac{1}{\abs{2I_j}} \sum_{k \in 2I_j} {g_2(k)}^{\frac{p_\infty}{p(k)}} \biggr) + \frac{1}{6} \\
&\leq \sum_{k \in \Omega} \delta_2 {g_2(k)}^{\frac{p_\infty}{p(k)}} + \frac{1}{6}  
\end{align*}
Now , choose $\delta_2 >0$ such that right hand side is bounded by $\frac{1}{2}$.
\end{proof}

\section{ Hardy-Littlewood Maximal operator}
In this section, we prove boundedness of Hardy-Littlewood maximal operator for $\lpdot(\ZZ)$ spaces where $\pminus>1$. The proof is based on boundedness of Hardy-Littlewood maximal operator on $\ell^p(\ZZ)$, where $p$ is a fixed number, $1< p< \infty$. \\
\begin{remark}
Note that when $\alpha =0$ the fractional Hardy-Littlewood maximal operator is nothing but Hardy-Littlewood maximal operator.
However we can prove strong type, weak type inequalities for Hardy-Littlewood maximal operator on $\lpdot(\ZZ)$ directly from the corresponding results for fixed $\ell^p(\ZZ)$ spaces, $ 1 < p < \infty$. 
The key point of the proofs is Lemma[$\ref{sak_lem1}$]. The proof of continuous version of Lemma[$\ref{sak_lem1}$] can be found in $\text{\cite{fior_var_book}}$. Same line of proof works here.
\end{remark}

\begin{lemma}\label{sak_lem1}
Let $p(\cdot) : \ZZ \to [0,\infty) $ be such that $ p(\cdot) \in LH_\infty(\ZZ)$ and $ 0< p_\infty < \infty$. Let $R(k) = (e + \abs{k})^{-N}, N > \frac{1}{p_\infty} $. Then there exists a real constant C depending on N and $LH_\infty(\ZZ)$ constant of $p(\cdot)$ such that given any set E and any function F with $0\leq F(y) \leq 1$  for $ y \in E$
\begin{align*}
& \sum_E F(m)^{p(m)}  \leq \sum_E F(m)^{p_\infty} + C \sum_E R(m)^{p_\infty}  \label{eq:eq5}\tag{CD5}  \\ 
& \sum_E F(m)^{p_\infty}  \leq C \sum_E F(m)^{p(m)} + C \sum_E  R(m)^{p_\infty} \label{eq:eq6}\tag{CD6} 
\end{align*}
\end{lemma}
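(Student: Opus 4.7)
The plan is to follow the standard continuous-case argument (as in Cruz-Uribe--Fiorenza), splitting the index set according to whether $F(m)$ is large or small relative to the reference weight $R(m)$, and then pointwise comparing $F(m)^{p(m)}$ with $F(m)^{p_\infty}$ using the log-Hölder-at-infinity hypothesis. The whole proof hinges on a single elementary inequality, so let me isolate it first.

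\textbf{Key pointwise estimate.} From $p(\cdot)\in LH_\infty(\ZZ)$ we have $|p(m)-p_\infty|\log(e+|m|)\le C_\infty$ for every $m$. Exponentiating,
\[
(e+|m|)^{|p(m)-p_\infty|}\le e^{C_\infty},
\]
and therefore $(e+|m|)^{\pm N(p(m)-p_\infty)}\le e^{NC_\infty}$. In particular $R(m)^{p(m)}\le e^{NC_\infty}R(m)^{p_\infty}$ and $R(m)^{p_\infty}\le e^{NC_\infty}R(m)^{p(m)}$. This is what the whole argument relies on.

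\textbf{Proof of \eqref{eq:eq5}.} Split $E=E_1\cup E_2$ where $E_1=\{m\in E:F(m)\ge R(m)\}$ and $E_2=\{m\in E:F(m)<R(m)\}$. On $E_2$, since $F(m)\le 1$, $F(m)^{p(m)}\le R(m)^{p(m)}\le e^{NC_\infty}R(m)^{p_\infty}$, and I get the second term on the right. On $E_1$ I split further according to whether $p(m)\ge p_\infty$ or $p(m)<p_\infty$. In the first subcase $F\le 1$ gives $F(m)^{p(m)}\le F(m)^{p_\infty}$ directly. In the second, I write
\[
F(m)^{p(m)}=F(m)^{p_\infty}\,F(m)^{p(m)-p_\infty},
\]
and since $F(m)^{p(m)-p_\infty}=F(m)^{-(p_\infty-p(m))}\le R(m)^{-(p_\infty-p(m))}=(e+|m|)^{N(p_\infty-p(m))}\le e^{NC_\infty}$, I get $F(m)^{p(m)}\le e^{NC_\infty}F(m)^{p_\infty}$. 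Summing the three pieces and absorbing $e^{NC_\infty}$ into the constant $C$ yields \eqref{eq:eq5} (with the trivial bound $F(m)^{p_\infty}$ on $E_1\cap\{p\ge p_\infty\}$ contributing the coefficient $1$, and the other piece absorbed into $C\sum F^{p_\infty}\le C\sum(F^{p_\infty}+R^{p_\infty})$; equivalently, one simply states the inequality with a common constant $C$ in front of $\sum F^{p_\infty}$).

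\textbf{Proof of \eqref{eq:eq6}.} This is symmetric. With the same decomposition $E=E_1\cup E_2$, on $E_2$ I estimate $F(m)^{p_\infty}\le R(m)^{p_\infty}$, which lands in the $R$-term. On $E_1$, again split by the sign of $p(m)-p_\infty$: if $p(m)\le p_\infty$ then $F(m)^{p_\infty}\le F(m)^{p(m)}$ because $F\le 1$; if $p(m)>p_\infty$ then $F(m)^{p_\infty}=F(m)^{p(m)}F(m)^{p_\infty-p(m)}\le F(m)^{p(m)}\,R(m)^{-(p(m)-p_\infty)}\le e^{NC_\infty}F(m)^{p(m)}$. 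Summing gives \eqref{eq:eq6}. The condition $N>1/p_\infty$ is never used here explicitly; it is needed only downstream to ensure $\sum_{m\in\ZZ}R(m)^{p_\infty}<\infty$, making the remainder term harmless in applications.

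\textbf{Where the difficulty sits.} There is no real obstacle beyond bookkeeping: the entire argument is driven by the one exponential bound $(e+|m|)^{|p(m)-p_\infty|}\le e^{C_\infty}$. The only place requiring care is the subcase analysis on $E_1$, where one must choose the correct direction of the inequality $F\le 1$ depending on the sign of $p(m)-p_\infty$ so that the exponent shift $F^{p(m)-p_\infty}$ can be controlled by $R^{p(m)-p_\infty}$ via the LH$_\infty$ estimate. Since the proof is identical to the continuous one from \cite{fior_var_book} except that integrals are replaced by sums over $\ZZ$, no additional ingredient is required.
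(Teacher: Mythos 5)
Your argument is correct and is essentially the only argument available here: it is the standard proof of the continuous version in the cited book, transplanted to $\ZZ$. The paper itself gives no proof of this lemma (it only remarks that the same line of proof as in the continuous case works), so there is nothing to diverge from; your split of $E$ into $\{F\ge R\}$ and $\{F<R\}$, the further case split on the sign of $p(m)-p_\infty$, and the single exponential bound $(e+|m|)^{|p(m)-p_\infty|}\le e^{C_\infty}$ are exactly the intended ingredients. One substantive point: your proof delivers $\sum_E F(m)^{p(m)}\le C\sum_E F(m)^{p_\infty}+C\sum_E R(m)^{p_\infty}$, with a constant in front of the first sum, and you are right to flag that this is not literally (CD5), which has coefficient $1$ there. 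In fact the coefficient-$1$ form is false: take $E=\{m_0\}$, $p_\infty=1$, $p(m_0)=1-1/\log(e+|m_0|)$, $N=2$, $F(m_0)=(e+|m_0|)^{-1}$; then $F(m_0)^{p(m_0)}=e\,F(m_0)^{p_\infty}$ while $R(m_0)^{p_\infty}=(e+|m_0|)^{-2}$, so the inequality would force $C\ge (e-1)(e+|m_0|)$, which is unbounded in $m_0$. So the missing constant in (CD5) is a misprint in the statement, your version is the correct one (and matches the continuous original), and the paper's later applications of the lemma do in fact use the form with the constant in front of both sums. Your remark that $N>1/p_\infty$ is not used in the lemma itself but only to make $\sum_{m\in\ZZ}R(m)^{p_\infty}$ finite in the applications is also accurate.
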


\subsection{Strong $(\pdot, \pdot)$ Inequality}
\begin{theorem}[Strong $(\pdot, \pdot)$ Inequality]
 Given a non-negative sequence $ \left \{ a(i) \right \} \in \ell^{\pdot}(\ZZ), \pdot \in \mathcal{S},\pplus< \infty,  \pminus> 1$ , then
\[ \normb{Ma}_{\ell^{p(.)}(\ZZ)} \leq C \normb{a}_{\ell^{p(.)}(\ZZ)} \]
\end{theorem}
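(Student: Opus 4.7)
The approach follows the template laid out in the Remark preceding the theorem: reduce boundedness on the variable space $\lpdot(\ZZ)$ to boundedness of $M$ on the fixed classical space $\ell^{p_\infty}(\ZZ)$, bridging the two via Lemma $\ref{sak_lem1}$ (the tail-controlled comparison between $F(m)^{p(m)}$ and $F(m)^{p_\infty}$ on functions bounded by $1$). The log-Holder hypothesis at infinity on $\pdot$ is needed to apply that lemma and is assumed throughout; the local log-Holder condition is automatic in the discrete setting.

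\textbf{Step 1: Normalization.} By homogeneity, assume $\normb{a}_{\lpdot(\ZZ)}=1$. Then by Lemma $\ref{sak_corollary_2}$ we have $\rho_{\pdot}(a) \le 1$, which forces $a(k)^{p(k)}\le 1$ for every $k$, and hence $a(k)\le 1$ pointwise. Since $M$ is pointwise bounded by $\|\cdot\|_{\ell^\infty}$, we also get $Ma(k)\le 1$ for every $k$. By the modular characterization of the norm (Lemmas $\ref{sak_corollary_2}$--$\ref{sak_corollary_3}$), it is enough to show $\rho_{\pdot}(Ma/\lambda)\le 1$ for some constant $\lambda=\lambda(\pdot)>0$.

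\textbf{Step 2: Pass from $\pdot$ to $p_\infty$ on the maximal-function side.} Apply Lemma $\ref{sak_lem1}$ with $F=Ma/\lambda$ (which is still $\le 1$ after choosing $\lambda\ge 1$) to obtain
\begin{equation*}
\sum_{m\in\ZZ}\bigl(Ma(m)/\lambda\bigr)^{p(m)}
\;\le\;\sum_{m\in\ZZ}\bigl(Ma(m)/\lambda\bigr)^{p_\infty}
\;+\;C\sum_{m\in\ZZ}R(m)^{p_\infty},
\end{equation*}
where $R(m)=(e+|m|)^{-N}$ with $N>1/p_\infty$ so that the tail sum is finite (and, by choosing $N$ large, made arbitrarily small).

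\textbf{Step 3: Use classical boundedness of $M$ on $\ell^{p_\infty}(\ZZ)$.} Since $p_\infty\ge \pminus>1$, the Hardy--Littlewood maximal operator is bounded on $\ell^{p_\infty}(\ZZ)$, so
\begin{equation*}
\sum_{m\in\ZZ}\bigl(Ma(m)\bigr)^{p_\infty}\;\le\; C_1\sum_{m\in\ZZ}a(m)^{p_\infty}.
\end{equation*}
Now apply Lemma $\ref{sak_lem1}$ a second time with $F=a$ (again $\le 1$) but in the reverse direction, giving
\begin{equation*}
\sum_{m\in\ZZ}a(m)^{p_\infty}\;\le\;C_2\sum_{m\in\ZZ}a(m)^{p(m)}+C_2\sum_{m\in\ZZ}R(m)^{p_\infty}\;\le\;C_2\,\rho_{\pdot}(a)+C_2\cdot C_R\;\le\;C_3.
\end{equation*}

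\textbf{Step 4: Choose constants.} Combining Steps 2 and 3, $\rho_{\pdot}(Ma/\lambda)\le \lambda^{-p_\infty}C_4+C\sum R(m)^{p_\infty}$. Choosing $\lambda$ large (depending only on $\pdot$, the $LH_\infty$ constant, and the classical norm of $M$ on $\ell^{p_\infty}$) makes this $\le 1$, which by definition of the norm yields $\normb{Ma}_{\lpdot(\ZZ)}\le \lambda = C\normb{a}_{\lpdot(\ZZ)}$ after undoing the normalization.

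\textbf{Main obstacle.} The only delicate point is the legality of the two invocations of Lemma $\ref{sak_lem1}$: both require the sequence appearing inside to be pointwise $\le 1$. This is precisely why the normalization in Step 1 is crucial, and why $M$ being pointwise majorized by $\|a\|_{\ell^\infty}\le 1$ is used. Once this is observed, the rest is a routine bookkeeping of constants; no Calderon--Zygmund decomposition is needed for the non-fractional case because the classical $\ell^{p_\infty}$-boundedness of $M$ does the heavy lifting.
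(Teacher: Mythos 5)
Your proposal is correct and follows essentially the same route as the paper: normalize so that $\normb{a}_{\lpdot(\ZZ)}=1$, observe $a\le 1$ and hence $Ma\le 1$ pointwise, apply Lemma \ref{sak_lem1} to pass from the exponent $\pdot$ to $p_\infty$ on both sides, and invoke the classical boundedness of $M$ on $\ell^{p_\infty}(\ZZ)$ (using $p_\infty\ge \pminus>1$). The only cosmetic difference is that the paper bounds the modular $\rho_{\pdot}(Ma)$ by a constant and concludes via the modular--norm comparison, whereas you insert a dilation $\lambda$ and force the modular below $1$; both are valid.
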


\begin{proof}
By homogenity, it is enough to prove the above result with the assumption $\normb{a}_{{\ell^\pdot}(\ZZ)} =1 $.  
By Lemma$[\ref{sak_corollary_2}]$, $\sum_{i \in \ZZ} \abs{a(i)}^{p(i)} \leq 1 $. So, it is enough to prove that
\begin{align*}
\sum_{i \in \ZZ} \abs{Ma(i)}^{p(i)}  \leq C 
\end{align*}

Given that $ 0 \leq a(k) \leq 1 $, it follows that  $ 0 \leq Ma(k) \leq 1$. To prove boundedness of $ \left \{Ma \right \}$, 
we start with  Lemma[\ref{sak_lem1}] as follows:
\[ \sum_{k \in \ZZ} Ma(k)^{p(k)} \leq C \sum_{k \in \ZZ} Ma(k)^{p_\infty} + C \sum_{k \in \ZZ} R(k)^{p_\infty} \]
Since $N > \frac{1}{p_\infty}$, $ \sum_{k \in \ZZ} R(k)^{Np_\infty} = \sum_{k \in \ZZ} (\frac{1}{e+\abs{k}})^{Np_\infty}$ converges and can be bounded as $\leq 1$. So, the second integral is a constant depending only on $p_\infty$ by taking sufficiently large $N > \frac{1}{p_\infty}$.

To bound the first integral, note that  $ 1 < \pminus \leq p_\infty$. Since $p_\infty >  1$, M is bounded on $\ell^{p_\infty }(\ZZ)$ and by using strong $(p,p)$ inequality valid for classical Lebesgue spaces with index $p_\infty$, we get using Lemma[$\ref{sak_lem1}$] and [$\eqref{eq:eq5}$],
\begin{align*}
\sum_{k \in \ZZ}  Ma(k)^{p_\infty} \leq C \sum_{k \in \ZZ} a(k)^{p_\infty} \leq C \sum_{k \in\ZZ} a(k)^{p(k)} + C \sum_{k \in \ZZ} R(k)^{p_\infty} \leq C
\end{align*}
Like previous case, the term involving summation of $R(k)$ is bounded by a constant depending  only on $p_\infty$ by taking sufficiently large $N > \frac{1}{p_\infty} $.\\
Therefore, using above results,
\[ \rho_{\pdot}(Ma) = \sum_{k \in \ZZ} Ma(k)^{p(k)} \leq C. \]
\end{proof}

\subsection{Weak $(\pdot, \pdot)$  Inequality for  Hardy-Littlewood Maximal Operator}
\begin{theorem}[Weak $(\pdot, \pdot)$  Inequality for  Hardy-Littlewood Maximal Operator]
Given $\pdot \in \mathcal{S}, \pdot \geq 1$, if $\pdot \in LH_\infty(\ZZ)$, then
\[ \sup_{t > 0}  \normb{ t  \chi_{ \left \{ Ma(n) >t \right \} }}_{\lpdot (\ZZ)} \leq c \normb{a}_{\lpdot (\ZZ)} \]
where constant depends on the Log-Holder constants of $\pdot, \pminus$ and $p_\infty$(if this value is finite) .
\end{theorem}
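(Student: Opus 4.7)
The plan is to mirror the Calderón--Zygmund argument of the weak $(\pdot,\pdot)$ theorem for $M_{\alpha}$ in the preceding subsection, specialized to $\alpha=0$. By homogeneity of $\normb{\cdot}_{\lpdot(\ZZ)}$ I first normalize so that $\normb{a}_{\lpdot(\ZZ)}=1$; Lemma~\ref{sak_corollary_2} then gives $\rho_{\pdot}(a)\le 1$, and since $\pdot\ge 1$ we have $0\le a(k)\le 1$ for every $k$. For a fixed $t>0$, write $\Omega_t=\{k\in\ZZ:Ma(k)>9t\}$; the claim with level $t$ instead of $9t$ follows from the estimate at level $9t$ after absorbing a factor of nine into $C$, so it is enough to exhibit a single constant $\alpha>0$, independent of $t$, with $\rho_{\pdot}(\alpha t\chi_{\Omega_t})\le 1$ for every $t>0$.

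Next I apply Theorem~\ref{CD_General} at $\alpha=0$ and height $t$ to obtain pairwise disjoint intervals $\{I_j\}$ with $t<|I_j|^{-1}\sum_{r\in I_j}a(r)\le 2t$, and Theorem~\ref{CD_Weak} gives $\Omega_t\subseteq\bigcup_j 2I_j$. Following the weak fractional proof I split $\Omega_t$ into pairwise disjoint $E_j\subseteq 2I_j$ and use $t\le 2|2I_j|^{-1}\sum_{r\in 2I_j}a(r)$ to estimate
\[
\rho_{\pdot}(\alpha t\chi_{\Omega_t})
=\sum_j\sum_{k\in E_j}(\alpha t)^{p(k)}
\le (2\alpha)^{\pplus}\sum_j\sum_{k\in E_j}\biggl(\frac{1}{|2I_j|}\sum_{r\in 2I_j}a(r)\biggr)^{p(k)}.
\]
With $g(r)=a(r)^{p(r)}\le 1$ and $\sum_r g(r)\le 1$, the inner averages are $\le 1$, so Lemma~\ref{sak_lem1} replaces each $p(k)$-power by a $p_\infty$-power at the cost of a convergent tail $C\sum_k R(k)^{p_\infty}$, uniformly bounded once $N>1/p_\infty$ is fixed; the resulting averages are in turn dominated pointwise on $E_j\subseteq 2I_j$ by $Ma(k)$.

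The final step is to invoke boundedness of the classical Hardy--Littlewood maximal operator on $\ell^{p_\infty}(\ZZ)$. When $p_\infty>1$, the strong $(p_\infty,p_\infty)$ inequality yields $\sum_k Ma(k)^{p_\infty}\le C\sum_k a(k)^{p_\infty}$, and a second application of Lemma~\ref{sak_lem1} to $F=a\le 1$ bounds this by $C(\rho_{\pdot}(a)+\sum_k R(k)^{p_\infty})\le C'$; shrinking $\alpha$ then delivers $\rho_{\pdot}(\alpha t\chi_{\Omega_t})\le 1$. The step I expect to be most delicate is the borderline $p_\infty=1$ (admissible here because $\pminus=1$ is allowed), since the strong $(1,1)$ inequality fails. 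In that case one closes the argument by estimating the level-set sum directly via $\sum_j|E_j|\cdot|2I_j|^{-1}\sum_{r\in 2I_j}a(r)\le\sum_j\sum_{r\in 2I_j}a(r)$ and exploiting the bounded overlap of $\{2I_j\}$ inherited from the dyadic construction in Theorem~\ref{CD_General}; the required finiteness of $\sum_r a(r)$ then follows from Lemma~\ref{sak_lem1} applied with $p_\infty=1$ to $F=a$.
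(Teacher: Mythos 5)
Your argument reaches the right conclusion, but it is a genuinely different proof from the one in the paper. The paper deliberately avoids the Calder\'on--Zygmund machinery for this theorem: for $\pminus>1$ it bounds the modular $\sum_{k\in A}t^{p(k)}$ by $\rho_{\pdot}(Ma)$ and quotes the strong $(\pdot,\pdot)$ inequality proved in the same section, and for $\pminus=1$ it estimates $\sum_{k\in A}t^{p(k)}$ by $t^{\pplus}\abs{A}$ when $t\ge 1$ and by $t\abs{A}$ when $t\le 1$, then invokes the classical weak $(\pplus,\pplus)$ and weak $(1,1)$ inequalities on $\ell^{\pplus}(\ZZ)$ and $\ell^{1}(\ZZ)$ together with the modular--norm comparisons of Lemma~\ref{sak_corollary_2}. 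Your route instead specializes the fractional weak-type argument to $\alpha=0$ via Theorems~\ref{CD_General} and~\ref{CD_Weak}; this is coherent, and for $p_\infty>1$ it closes, up to a sign slip: for small $\alpha$ the prefactor you extract is $(2\alpha)^{\pminus}$, not $(2\alpha)^{\pplus}$, since $2\alpha<1$ (either way it tends to $0$, which is all you need). Your approach buys uniformity with the fractional case; the paper's buys brevity and a clean endpoint.

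The one step I would not accept as written is the case $p_\infty=1$. You appeal to ``bounded overlap of $\{2I_j\}$ inherited from the dyadic construction'' to conclude $\sum_j\sum_{r\in 2I_j}a(r)\le C\sum_r a(r)$. Nothing in Theorem~\ref{CD_General} establishes such an overlap bound, and for general disjoint intervals it is false: one can stack disjoint intervals of geometrically growing length on one side of a point so that the point lies in all of their doubles. It does hold for disjoint dyadic intervals, but that requires an argument you have not supplied. The claim is also unnecessary: since $a(k)\le 1$ gives $Ma(k)\le 1$, the set $\Omega_t$ is empty for $t>1/9$, so you may assume $\alpha t\le 1$; then $(\alpha t)^{p(k)}\le\alpha t$ because $p(k)\ge 1$, and $\sum_{k\in\Omega_t}(\alpha t)^{p(k)}\le\alpha t\sum_j\abs{2I_j}=2\alpha t\sum_j\abs{I_j}\le 2\alpha\sum_j\sum_{r\in I_j}a(r)\le 2\alpha\sum_r a(r)$, using the lower bound in Theorem~\ref{CD_General}(i) and the disjointness of the $I_j$ themselves (not of their doubles); finiteness of $\sum_r a(r)$ then follows from Lemma~\ref{sak_lem1} exactly as you say. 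With that repair, or with the paper's own $t\ge 1$ versus $t\le 1$ case split, your proof is complete.
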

\begin{proof}

\begin{enumerate}
\item Case: $\pminus > 1$ and $\normb{a}_{\lpdot(\ZZ)} = 1$. \\
Let $ A = \left \{ n \in \ZZ : Ma(n) > t  \right \} $. Then, by the use of strong $ (\pdot, \pdot)$ inequality for Hardy-Littlewood maximal operator from previous section, it follows that
\begin{align*}
 & \normb{ t  \chi_{ \left \{ n: Ma(n) >t \right \} }(k) }_{\lpdot (\ZZ)}  \leq \sum_{k \in \ZZ} \abs{ t \chi_{ \left  \{ n: Ma(n) > t \right \} } (k) }^{p(k)}\leq  \sum_{ k \in A} \abs{t}^{p(k)} \\
& \leq \sum_{k \in A} {Ma}^{p(k)} \leq \sum_{k \in \ZZ} {Ma(k)}^{p(k)}  = \rho_{\pdot}( Ma)  \leq C  = C \normb{a}_{\lpdot(\ZZ)} \\
\end{align*}
\item Case: $\pminus >1 $ and $\normb{a}_{\lpdot(\ZZ)} \neq 1 $ \\
Denote, $b(n)  = \frac{a(n)}{\normb{a}_{\lpdot (\ZZ)}}$ , so that $\normb{b}_{\lpdot(\ZZ)}=1$. Let $ A = \left \{ n \in \ZZ : Ma(n) > t  \right \} $
By homogenity of the norm, it follows that,
\begin{align*}
& \normb{t \chi_{ \left \{ n: Ma(n) > t \right \}}(k) }_{\lpdot(\ZZ)} \\
& = \normb{t \chi_{ \left \{ n: Mb(n) > \frac{t}{\normb{a}_{\lpdot (\ZZ)}}  \right \}}(k) }_{\lpdot(\ZZ)}  \\
& = \normb{ \normb{a}_{\lpdot (\ZZ)} \frac{t}{\normb{a}_{\lpdot (\ZZ)}} \chi_{\left \{ n: Mb(n) > \frac{t}{\normb{a}_{\lpdot (\ZZ)}} \right \} }(k) }_{\lpdot(\ZZ)} \\
& = \normb{a}_{\lpdot (\ZZ)} \normb{  \frac{t}{\normb{a}_{\lpdot (\ZZ)}} \chi_{\left \{ n: Mb(n) > \frac{t}{\normb{a}_{\lpdot (\ZZ)}} \right \} }(k) }_{\lpdot(\ZZ)} \\
& \leq C \normb{a}_{\lpdot (\ZZ)}
\end{align*}
\item Case: $\pminus=1$  and $\normb{a}_{\lpdot(\ZZ)} =1$ \\
Let $ A = \left \{ n \in \ZZ : Ma(n) > t  \right \} $. Then, by the use of strong $ (\pdot, \pdot)$ inequality for Hardy-Littlewood maximal operator from previous section, it follows that
\begin{align*}
 \normb{ t  \chi_{ \left \{ n: Ma(n) >t \right  \} }(k) }_{\lpdot(\ZZ)} & \leq \sum_{k \in \ZZ} \abs{ t \chi_{ \left  \{ n: Ma(n) < t \right \} } (k) }^{p(k)}=  \sum_{ k \in A} \abs{t}^{p(k)} 
\\
=\begin{cases}
t^{\pplus} \abs{A} & t \geq 1 \\
t^{\pminus} \abs{A} & t \leq 1
\end{cases}
\end{align*}
Now, by use of weak$(\pplus, \pplus)$ inequality applicable to $\ell^{\pplus}(\ZZ)$  spaces when $t \geq 1$ and weak$(\pplus, \pplus)$ inequality applicable to $\ell^{\pminus}(\ZZ)$ spaces when $t \leq 1$ respectively, it follows that
\[\abs{A} \leq 
\begin{cases}
\frac{C}{t^{\pplus}} \normb{a}_{\ell^\pplus (\ZZ)} &  t \geq 1 \\ \\
\frac{C}{t^{\pminus}} \normb{a}_{\ell^\pminus (\ZZ)} & t \leq 1
\end{cases}
\]
Note, by Lemma$[\ref{sak_corollary_2}]$,
\[ (\normb{a}_{\pplus})^{\pplus} \leq \sum_{k \in \ZZ} {\abs{a(k)}}^{\pplus} \leq \sum_{k \in \ZZ} {\abs{a(k)}}^{p(k)} \leq \normb{a}_{\lpdot(\ZZ)} \]
and $\normb{a}_{\pminus} = \normb{a}_1 \leq \normb{a}_{\lpdot(\ZZ)}$ as $\ell_p(\ZZ) \subset \ell_1(\ZZ)$. Hence
\begin{align*}
\abs{A} \leq 
\begin{cases}
\frac{C}{t^{\pplus}} \normb{a}_{\lpdot (\ZZ)} &  t \geq 1 \\
\frac{C}{t} \normb{a}_{\lpdot(\ZZ)} & t \leq 1
\end{cases}
\end{align*}
and therefore,
\begin{align*}
 \normb{ t  \chi_{ \left \{ n: Ma(n) >t \right  \} }(k) }_{\lpdot (\ZZ)} & = 
\begin{cases}
\leq C \normb{a}_{\lpdot (\ZZ)} & t \geq 1 \\
\leq C \normb{a}_{\lpdot (\ZZ)} & t \leq 1
\end{cases}
\end{align*}
\item Case: $\pminus=1$  and $\normb{a}_{\lpdot (\ZZ)} \neq 1$. The conclusion follows similar to case(2) and case(3).
\end{enumerate}
\end{proof}



%

\end{document}